\documentclass[10pt]{amsart}
\usepackage{amsfonts}

%%%%%%%%%%%%%%%%%%%%%%%%%%%%%%%%%%%%%%%%%%%%%%%%%%%%%%%%%%%%%%%%%%%%%%%%%%%%%%%%%%%%%%%%%%%%%%%%%%%
\usepackage[centertags]{amsmath}
\usepackage{color,graphicx,shortvrb}
\usepackage{amsthm}
\usepackage{newlfont}
\usepackage{graphicx}
\usepackage{amsmath}
\usepackage{epsfig}

%TCIDATA{OutputFilter=Latex.dll}
%TCIDATA{LastRevised=Sun Nov 15 22:41:36 2015}
%TCIDATA{<META NAME="GraphicsSave" CONTENT="32">}
%TCIDATA{Language=American English}
%TCIDATA{CSTFile=amsart.cst}

\newtheorem{theorem}{Theorem}

\newtheorem{corollary}[theorem]{Corollary}

\newtheorem{lemma}[theorem]{Lemma}

\newtheorem{remark}[theorem]{Remark}

\begin{document}
\title{Stability of a tree-shaped network of strings and beams}
\author{Ka\"{\i}s AMMARI}
\address{UR Analysis and Control of Pde, UR 13ES64, Department of Mathematics,
Faculty of Sciences of Monastir, University of Monastir, 5019 Monastir, Tunisia}
\email{kais.ammari@fsm.rnu.tn}
\author{Farhat Shel}
\address{UR Analysis and Control of Pde, UR 13ES64, Department of Mathematics,
Faculty of Sciences of Monastir, University of Monastir, 5019 Monastir, Tunisia,
Tunisia}
\email{farhat.shel@ipeit.rnu.tn}

\begin{abstract}
In this paper we study the stability of a tree-shaped network of elastic
strings and beams with some feedbacks at the ends. The whole system is
asymtotically stable. Moreover, the energy of the solution decay
exponentially to zero if there is no beam following a string ( from the root
to the leaves) and decay polynomially if not. We use a frequency domain
method from the semigroups theory.
\end{abstract}

\subjclass[2010]{35R02, 35M33, 35B40, 93D15, 74K05, 74K10}
\keywords{uniform stability, polynomial stability , frequency domain method, network,
strings, Euler-Bernoulli beams}
\maketitle
\tableofcontents

\thispagestyle{empty}

\vfill\break

\section{Introduction}

In past decades, the dynamic behavior of networks of flexible structures
have been studied by some authors. See for instance, \cite{Lag94, Zua06, KSL}
and the references therein. The importance of these studies lies in the need
for engineering to eliminate vibrations in such composite structures. In
this paper we consider a model $(\mathcal{S})$ of a tree-shaped network of $%
N $ elastic materials, constituting of \textbf{strings} and Euler-Bernoulli 
\textbf{beams}.

To start let us first introduce some notations for the tree under
consideration (as introduced in \cite{Abd12} or in \cite{Mer08}). Let $%
\mathcal{G}$ be a finite planar tree, with $N$ edges and $p=N+1$ vertices.
We denote by $E=\{e_{1},...,e_{N}\}$ the set of edges, $\mathcal{V}%
=\{a_{1},...,a_{p}\}$, the set of vertices of $\mathcal{G}$ and we suppose
that $a_{1}$ is the root of $\mathcal{G}$ and that $a_{1}$ and $a_{2}$ are
ends of $e_{1}.$

\noindent For a fixed vertex $a_{k},$ let $I(a_{k})$ be the set of indices of
edges adjacent to $a_{k}$. Denote by $\mathcal{V}_{int}$ the set of interior
vertices and $\mathcal{V}_{ext}$ the set of exterior vertices of $\mathcal{G}%
.$ Each edge $e_{j}$ of length $\ell _{j}$ is a curve parametrized by 
\begin{equation*}
\pi _{j}:[0,\ell _{j}]\longrightarrow e_{j},\;x_{j}\mapsto \pi _{j}(x_{j}),
\end{equation*}
and sometimes identified with the interval $(0,\ell _{j}).$ The incidence
matrix $D=(d_{ij})_{p\times N}$ is defined by 
\begin{equation*}
d_{ij}=\left\{ 
\begin{tabular}{l}
$1$ if $\pi _{j}(\ell _{j})=a_{i},$ \\ 
$-1$ if $\pi _{j}(0)=a_{i},$ \\ 
$0$ otherwise,
\end{tabular}
\right.
\end{equation*}
and for a function $u:\mathcal{G}\longrightarrow \mathbb{C}$ we set $%
u^{j}=u\circ \pi _{j}$ its restriction to the edge $e_{j}$ and we will
denote $u^{j}(x)=u^{j}(\pi _{j}(x))$ for any $x$ in $(0,\ell _{j}).$

We denote by $\left\langle .,.\right\rangle $ and $\left\| .\right\| $ the
inner product and norm in $L^{2}$-space, respectively.

Suppose that the equilibrium position of the tree of elastic strings and
beams coincides with $\mathcal{G}.$

Our model is then described as follows: every string $e_{j}$ satisfies the following equation 
\begin{equation}
u_{tt}^{j}-u_{xx}^{j}=0\text{ in }(0,\ell _{j})\times (0,\infty ),  \label{a}
\end{equation}
and every beam $e_{j}$ satisfies the following equation 
\begin{equation}
u_{tt}^{j}+u_{xxxx}^{j}=0\text{ in }(0,\ell _{j})\times (0,\infty )
\label{b}
\end{equation}
where $u^{j}=u^{j}(x,t)$ is the function describing the displacement of the
string or beam $e_{j}$.

\noindent The initial conditions are 
\begin{equation}
u^{j}(x,0)=u_{0}^{j}(x),\;\;u_{t}^{j}(x,0)=u_{1}^{j}(x).  \label{c}
\end{equation}

\noindent Denote by $I_{S}(a_{k})$ (resp. $I_{B}(a_{k})$) the set of strings
(resp. beams) adjacent to $a_{k}$ and by $\mathcal{V}_{ext}^{S}$ and $%
\mathcal{V}_{ext}^{B}$ respectively the set of external nodes of strings and
those of beams, different from $a_{1}$. Then the transmission conditions at
the inner nodes are 
\begin{equation}
\left\{ 
\begin{tabular}{l}
$u^{j}(a_{k},t)=u^{l}(a_{k},t),\ \;\;\;\;j,l\in I(a_{k}),\;\;\;\;a_{k}\in 
\mathcal{V}_{int},$ \\ 
$u_{xx}^{j}(a_{k},t)=u_{xx}^{l}(a_{k},t),\ \;\;\;\;j,l\in
I_{B}(a_{k}),\;\;\;\;a_{k}\in \mathcal{V}_{int},$ \\ 
$\sum\limits_{j\in
I_{B}(a_{k})}d_{kj}u_{x}^{j}(a_{k},t)=0,\;\;\;\;\;a_{k}\in \mathcal{V}_{int}$%
, \\ 
$\sum\limits_{j\in I_{B}(a_{k})}d_{kj}u_{xxx}^{j}(0,t)-\sum\limits_{j\in
I_{S}(a_{k})}d_{kj}u_{x}^{j}(0,t)=0,\;\;a_{k}\in \mathcal{V}_{int},$%
\end{tabular}
\right.  \label{d}
\end{equation}
and the boundary conditions are 
\begin{equation}
\left\{ 
\begin{tabular}{l}
$u^{j_{k}}(a_{k},t)=0,\;\;\;\;\;a_{k}\in \mathcal{V}_{ext},$ \\ 
$u_{xx}^{j_{k}}(a_{k},t)=0,\;\;\;\;a_{k}\in \mathcal{V}_{ext}^{B}$%
\end{tabular}
\right. \text{{}}  \label{boun1}
\end{equation}
where $j_{k}$ is the index of the unique edge adjacent to $a_{k}\in \mathcal{%
V}_{ext}.$

For a classical solution $u$ of $(\mathcal{S}),$ the energy is defined as
the sum of the energy of its components, that is, 
\begin{eqnarray*}
E(t) &=&\frac{1}{2}\sum\limits_{j=1}^{N}\int_{0}^{\ell _{j}}\left|
u_{t}^{j}(x,t)\right| ^{2}dx+\frac{1}{2}\sum\limits_{j\in
I_{S}}\int_{0}^{\ell _{j}}\left| u_{x}^{j}(x,t)\right| ^{2}dx \\
&&+\frac{1}{2}\sum\limits_{j\in I_{B}}\int_{0}^{\ell _{j}}\left|
u_{xx}^{j}(x,t)\right| ^{2}dx
\end{eqnarray*}
where $I_{S}$ and $I_{B}$ are the sets of indices of strings and beams
respectively.

\noindent Differentiate formally the energy function with respect to time $t,
$ we get 
\begin{equation*}
\frac{dE}{dt}(t)=0,
\end{equation*}
and the system is conservative.

Stability of such models of networks of
strings or of beams, has been proved before, by applying a control at an external node or by forcing
the damping conditions at inner nodes. In \cite{Amm04}, the authors proved
the polynomial stability of a star-shaped network of strings when a feedback
is applied at the common node and in \cite{Amm05} and \cite{Zua09} the
authors proved a similar result for a tree of strings when the feedback is
applied at an external node. In \cite{Amm07} we consider a network of beams.
See also \cite{Wan08} for exponential stability of a star-shaped network of
beams and \cite{Han10} for asymptotic stability of a star-shaped network of
Timoshenko beams. In \cite{Far12} and \cite{Far13} we add thermoelastic
edges to the network of elastic materials to obtain an exponential stability
result.

For strings-beams networks see \cite{Amm11} where the authors considered a
star-shaped network of beams and a string, with controls applied at all the
exterior nodes. They proved a result of exponential stability. Some results
of polynomial stability have proved before, for coupled string-beam systems 
\cite{Amm09} (see \cite{KSL} for general setting) and for chains of
alternated beams and strings \cite{Amm12}, when feedbacks are applied at
inner nodes. The case of a $2-d$ coupled system of a wave equation and a
plate equation has been studied by K. Ammari and S. Nicaise in \cite{Amm10}.
They proved a result of exponential stability under some geometric
conditions.

In this paper we study a more general case of networks, in fact, it is the
model $(\mathcal{S})$ presented at the beginning, stabilized by applying
feedbacks at all leaves (the root remains free). For this, let $\mathcal{V}%
^{\ast }=\mathcal{V}-\{a_{1}\}$, $\mathcal{V}_{ext}^{\ast }=\mathcal{V}%
_{ext}-\{a_{1}\}$ and let $\delta $ in $\{0,1\}$ with $\delta =1$ if $e_{1}$
is a string and $\delta =0$ if $e_{1}$ is a beam. Then instead of (\ref
{boun1}) we take 
\begin{eqnarray}
u^{1}(a_{1},t) &=&0,\;(1-\delta )u_{xx}^{1}(a_{1},t)=0,  \notag \\
u_{x}^{j_{k}}(a_{k},t) &=&-d_{kj}u_{t}^{j_{k}}(a_{k},t),\;\;\;\;a_{k}\in 
\mathcal{V}_{ext}^{S},  \notag \\
u_{xxx}^{j_{k}}(a_{k},t)
&=&d_{kj}u_{t}^{j_{k}}(a_{k},t),\;\;u_{x}^{j_{k}}(a_{k},t)=0,\;\;\;\;a_{k}%
\in \mathcal{V}_{ext}^{B}.  \notag
\end{eqnarray}
Formally, we have 
\begin{equation*}
\frac{d}{dt}E(t)=-\sum\limits_{a_{k}\in \mathcal{V}_{ext}^{\ast }}\left|
u_{t}^{j_{k}}(a_{k},t)\right| ^{2}\leq 0.
\end{equation*}
So the system is dissipative.

\noindent We prove different decay results of the energy of the system
depending on the position of beams relative to strings. Precisely $(\mathcal{%
S})$ is exponentially stable if there is no beam following a string from the
root to leaves and polynomially stable if not. Moreover, we give an example
corresponding to the last case which is not exponentially stable. The method
that we use to show exponential or polynomial stability is based on the
resolvent approach.

The paper is organized as follows: In section \ref{sec2}, we reformulate the
system $(\mathcal{S})$ as an evolution equation in a Hilbert space, and
prove that it is associated with a $\mathcal{C}_{0}$-semigroup of
contraction and in section \ref{sec3}, by using frequency domain method, we
first prove, under some conditions, that the system $(\mathcal{S})$ is
exponentially stable, then we give a result of polynomial stability.

\section{Abstract setting} \label{sec2}

The aim of this section is to rewrite the system $(\mathcal{S})$ as an
evolution equation in an appropriate Hilbert space. We then prove the
existence and uniqueness of solutions of the problem using semigroup theory.

Let us consider 
\begin{equation*}
V=\left\{ f=(f_{S},f_{B})\in \prod\limits_{j\in I_{S}}H^{1}(0,\ell
_{j})\times \prod\limits_{j\in I_{B}}H^{2}(0,\ell _{j})\mid f\text{
satisfies (\ref{s1'})}\right\}
\end{equation*}
where  $f_{S}=(f_{j})_{j\in I_{S}}$, $f_{B}=(f_{j})_{j\in I_{B}},$ 
and 
\begin{equation}
\left\{ 
\begin{tabular}{l}
$f^{1}(a_{1})=0,$ \\ 
$f^{j}(a_{k})=f^{l}(a_{k}),\;\ j,l\in I(a_{k}),\;\;a_{k}\in \mathcal{V}%
_{int}, $ \\ 
$\partial _{x}f^{j_{k}}(a_{k})=0,\ \;\;a_{k}\in \mathcal{V}_{ext}^{B}$, \\ 
$\sum_{j\in I_{B}(a_{k})}d_{kj}\partial _{x}f^{j}(a_{k})=0,\ \;\;a_{k}\in 
\mathcal{V}_{int}$.
\end{tabular}
\right.  \label{s1'}
\end{equation}

\noindent Note that we can rewrite the last two equations in one, as follows 
\begin{equation*}
\sum_{j\in I_{B}(a_{k})}d_{kj}\partial _{x}f^{j}(a_{k})=0,\ \;\;a_{k}\in 
\mathcal{V}^{\ast }.
\end{equation*}

\noindent Define the energy space of $(\mathcal{S})$ by 
\begin{equation*}
\mathcal{H}=V\times \prod\limits_{j=1}^{N}L^{2}(0,\ell _{j})
\end{equation*}
endowed by the inner product 
\begin{equation*}
\left\langle y_{1},y_{2}\right\rangle _{\mathcal{H}}:=\sum_{j\in
I_{S}}\left\langle \partial _{x}f_{1}^{j},\partial
_{x}f_{2}^{j}\right\rangle +\sum_{j\in I_{B}}\left\langle \partial
_{x}^{2}f_{1}^{j},\partial _{x}^{2}f_{2}^{j}\right\rangle
+\sum_{j=1}^{N}\left\langle g_{1}^{j},g_{2}^{j}\right\rangle
\end{equation*}
where $y_{k}=\left( f_{k},g_{k}\right) ,$ $k=1,2.$ Then $\mathcal{H}$ is a
Hilbert space.

Now define the operator $\mathcal{A}$ on $\mathcal{H}$ by 
\begin{equation*}
\mathcal{D}(\mathcal{A})=\left\{ 
\begin{array}{c}
y=(u,v)\in V\times V\mid u_{S}\in \prod\limits_{j\in I_{S}}H^{2}(0,\ell _{j})%
\text{, }u_{B}\in \prod\limits_{j\in I_{B}}H^{4}(0,\ell _{j}) \\ 
\text{ and }y\text{ satisfies (\ref{s2''})}
\end{array}
\right\}
\end{equation*}
where 
\begin{equation}
\left\{ 
\begin{tabular}{l}
$\partial _{x}u^{j_{k}}(a_{k})=-d_{kj}v^{j_{k}}(a_{k}),\;\;a_{k}\in \mathcal{%
V}_{ext}^{S},$ \\ 
$(1-\delta )\partial _{x}^{2}u^{1}(a_{1})=0,$ \\ 
$\partial _{x}^{2}u^{j}(a_{k})=\partial
_{x}^{2}u^{l}(a_{k}),\;\;j,l=I_{B}(a_{k}),\;\;a_{k}\in \mathcal{V}_{int},$
\\ 
$\partial _{x}^{3}u^{j_{k}}(a_{k})=d_{kj}v^{j_{k}}(a_{k}),\;\;a_{k}\in 
\mathcal{V}_{ext}^{B},$ \\ 
$\sum\limits_{j\in I_{S}(a_{k})}d_{kj}\partial
_{x}u^{j}(a_{k})-\sum\limits_{j\in I_{B}(a_{k})}d_{kj}\partial
_{x}^{3}u^{j}(a_{k})=0,\;\;a_{k}\in \mathcal{V}_{int}$%
\end{tabular}
\right.  \label{s2''}
\end{equation}
and

\begin{equation*}
\mathcal{A}\left( 
\begin{array}{c}
u_{S} \\ 
u_{B} \\ 
v_{S} \\ 
v_{B}
\end{array}
\right) =\left( 
\begin{array}{c}
v_{S} \\ 
v_{B} \\ 
\partial _{x}^{2}u_{S} \\ 
-\partial _{x}^{4}u_{B}
\end{array}
\right), \, (u_S,u_B,v_S,v_B) \in {}(\mathcal{A}).
\end{equation*}

\noindent Then, putting $y=(u,u_{t}),$ we write the system $(\mathcal{S})$
into the following first order evolution equation 
\begin{equation}
\left\{ 
\begin{array}{c}
\frac{dy}{dt} = \mathcal{A}y, \\ 
y(0)=y_{0}
\end{array}
\right.  \label{0500}
\end{equation}
on the energy space $\mathcal{H}$, where $y_{0}=(u_{0},u_{1}).$

We have the following result,

\begin{lemma}
The operator $\mathcal{A}$ is the infinitesimal generator of a $\mathcal{C}%
_{0}$-semigroup of contraction $(T(t))_{t \geq 0}$.
\end{lemma}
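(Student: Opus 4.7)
The plan is to invoke the Lumer--Phillips theorem: I will establish that $\mathcal{A}$ is densely defined, dissipative, and that $\lambda I - \mathcal{A}$ is surjective for some $\lambda \geq 0$. Density of $\mathcal{D}(\mathcal{A})$ in $\mathcal{H}$ is routine, since $\mathcal{D}(\mathcal{A})$ already contains functions that are smooth on each edge and supported away from vertices.

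For dissipativity, I would compute $\mathrm{Re}\,\langle \mathcal{A}y, y\rangle_{\mathcal{H}}$ for $y=(u,v)\in \mathcal{D}(\mathcal{A})$. Expanding yields
$$\langle \mathcal{A}y, y\rangle_{\mathcal{H}} = \sum_{j\in I_{S}}\langle \partial_x v^j, \partial_x u^j\rangle + \sum_{j\in I_{B}}\langle \partial_x^2 v^j, \partial_x^2 u^j\rangle + \sum_{j\in I_{S}}\langle \partial_x^2 u^j, v^j\rangle - \sum_{j\in I_{B}}\langle \partial_x^4 u^j, v^j\rangle.$$
Integrating by parts once on each string and twice on each beam, the volume integrals rearrange so that, after taking real parts, only vertex boundary contributions survive. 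At each interior vertex, the transmission conditions in (\ref{s1'}) (enjoyed by $v\in V$) and in (\ref{s2''}) (enjoyed by $u$) make these terms telescope to zero; at each exterior vertex the feedback conditions $\partial_x u^{j_k}(a_k)=-d_{kj}v^{j_k}(a_k)$ and $\partial_x^3 u^{j_k}(a_k)=d_{kj}v^{j_k}(a_k)$ (together with $\partial_x u^{j_k}(a_k)=0$ on beam leaves) produce exactly $-|v^{j_k}(a_k)|^2$. One concludes $\mathrm{Re}\,\langle \mathcal{A}y,y\rangle_{\mathcal{H}} = -\sum_{a_k\in \mathcal{V}_{ext}^\ast}|v^{j_k}(a_k)|^2 \leq 0$.

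For maximality I would prove $0\in \rho(\mathcal{A})$. Given $(f,g)\in \mathcal{H}$, the equation $\mathcal{A}(u,v) = (f,g)$ forces $v = f \in V$ and imposes second (resp.\ fourth) order equations on $u$ along string (resp.\ beam) edges together with the boundary/transmission conditions of $\mathcal{D}(\mathcal{A})$ with $v = f$. Multiplying by a test function $w\in V$ and integrating by parts edge-by-edge leads to the variational problem: find $u\in V$ with
$$a(u,w) := \sum_{j\in I_{S}}\langle \partial_x u^j, \partial_x w^j\rangle + \sum_{j\in I_{B}}\langle \partial_x^2 u^j, \partial_x^2 w^j\rangle = L(w),\quad \forall\, w\in V,$$
where $L$ gathers the $L^2$ pairings with $g$ and the boundary traces of $f$ at the leaves. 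The form $a$ is precisely the inner product on $V$, hence is continuous and coercive (the root condition $u^1(a_1)=0$ propagates through the tree via continuity at inner vertices to yield a Poincar\'e-type estimate), so Lax--Milgram delivers a unique $u\in V$. Elliptic regularity applied edge-wise gives $u_S\in \prod H^2$ and $u_B\in \prod H^4$, and reading the variational identity against test functions supported near individual vertices recovers the natural boundary and Kirchhoff-type transmission conditions of (\ref{s2''}), placing $u$ in $\mathcal{D}(\mathcal{A})$.

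I expect the main obstacle to be the careful bookkeeping of the boundary terms from the successive integrations by parts and checking that they reassemble into exactly the conditions listed in (\ref{s1'}) and (\ref{s2''}); in particular, confirming that the mixed string/beam Kirchhoff relation $\sum_{I_S(a_k)} d_{kj}\partial_x u^j(a_k) - \sum_{I_B(a_k)} d_{kj}\partial_x^3 u^j(a_k)=0$ at interior nodes appears naturally from the variational formulation (by choosing test functions concentrated at such a vertex) rather than being imposed by hand.
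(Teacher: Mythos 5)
Your proof follows the same skeleton as the paper's --- Lumer--Phillips, the identical integration-by-parts computation giving $\mathrm{Re}\langle\mathcal{A}y,y\rangle_{\mathcal{H}}=-\sum_{a_k\in\mathcal{V}_{ext}^{\ast}}|v^{j_k}(a_k)|^2$, and Lax--Milgram for the range condition --- but with one genuine difference: you invert $\mathcal{A}$ at $\lambda=0$, whereas the paper solves $(\lambda-\mathcal{A})y=z$ for $\lambda>0$. That choice is not cosmetic. The paper's sesquilinear form contains the extra terms $\lambda^2\sum_j\int|w^j|^2$ and $\lambda\sum_{a_k\in\mathcal{V}_{ext}}|w^{j_k}(a_k)|^2$, so coercivity with respect to $\|\cdot\|_V$ is immediate; your form $\sum_{I_S}\|\partial_x w^j\|^2+\sum_{I_B}\|\partial_x^2 w^j\|^2$ requires a genuine network Poincar\'e inequality, and the justification you give (propagating the root condition $w^1(a_1)=0$ through continuity of values at inner vertices) is not enough: on a beam edge the form only controls $\partial_x^2 w^j$, so to bound $\|w^j\|_{L^2}$ you must first control $\partial_x w^j$ at some point, which forces you to use the conditions $\partial_x w^{j_k}(a_k)=0$ at beam leaves and $\sum_{j\in I_B(a_k)}d_{kj}\partial_x w^j(a_k)=0$ at interior vertices in an induction from the leaves inward, before propagating values from the root outward. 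The estimate is true, but this is the real content of your coercivity step and should be spelled out. Two further small points: (i) having shown $0\in\rho(\mathcal{A})$ you still need to remark that $\rho(\mathcal{A})$ is open (so some $\lambda>0$ lies in it), or use a version of Lumer--Phillips accepting $\lambda=0$, since the standard statement asks for surjectivity of $\lambda-\mathcal{A}$ with $\lambda>0$; (ii) your density argument is incorrect as stated --- smooth functions supported away from the vertices are \emph{not} dense in $V$ for the $H^1$/$H^2$ topology, since traces at vertices are continuous functionals; density is better obtained as an automatic consequence of dissipativity plus the range condition on a Hilbert space (the paper, like most, leaves it implicit). What your route buys, once the Poincar\'e inequality is properly proved, is the slightly stronger conclusion $0\in\rho(\mathcal{A})$; what the paper's route buys is that every analytic step is trivial.
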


\begin{proof}
By Lumer-Phillips' theorem (see \cite{Paz83}), it suffices to show that $\mathcal{A}$ is dissipative maximal. 
First, for any $y \in \mathcal{H}$ we have,
\begin{eqnarray*}
Re(\left\langle \mathcal{A}y,y\right\rangle _{\mathcal{H}}) &=&Re\left(
\sum_{j\in I_{S}}\int_{0}^{\ell _{j}}(\partial _{x}v^{j}\overline{\partial
_{x}u^{j}}dx+\partial _{x}^{2}u^{j}\overline{v^{j}})dx\right.  \\
&&+\left. \sum_{j\in I_{B}}\int_{0}^{\ell _{j}}(\partial _{x}^{2}v^{j}%
\overline{\partial _{x}^{2}u^{j}}dx-\partial _{x}^{4}u^{j}\overline{v^{j}}%
)dx\right) .
\end{eqnarray*}
 By somes integrations by parts, we obtain using the boundary and transmissions conditions (\ref{s1'}-\ref{s2''}), 
\begin{equation*}
Re(\left\langle \mathcal{A}y,y\right\rangle _{\mathcal{H}})=-\sum%
\limits_{a_{k}\in \mathcal{V}_{ext}^{\ast }}\left| v^{j_{k}}(\ell
_{j_{k}})\right| ^{2}\leq 0.
\end{equation*}
Then the operator $\mathcal{A}$ is dissipative.

We show now that every positive real number $\lambda $ belongs to $\rho (%
\mathcal{A}).\ $Let $z=(f,g)\in \mathcal{H},$ we look for $y=(u,v)\in 
\mathcal{D}(\mathcal{A})$ such that 
\begin{equation*}
(\lambda -\mathcal{A})y=z 
\end{equation*}
i.e.,
\begin{eqnarray}
\lambda u^{j}-v^{j} &=&f^{j},\;j=1,...,N, \\
\lambda v^{j}-\partial _{x}^{2}u^{j} &=&g^{j},\;j\in I_{S},  \label{lax2} \\
\lambda v^{j}+\partial _{x}^{4}u^{j} &=&g^{j},\;j\in I_{B}.  \label{lax3}
\end{eqnarray}
Then
\begin{eqnarray}
\lambda^{2} u^{j}-\partial _{x}^{2}u^{j} &=&g^{j}+\lambda f^{j},\;j\in I_{S},  \label{lax2'} \\
\lambda^{2} u^{j}+\partial _{x}^{4}u^{j} &=&g^{j}+\lambda f^{j},\;j\in I_{B}.  \label{lax3'}
\end{eqnarray}
Let $w$ in $V.$ Multiplying the first equation by $w_{S}$ and the second
equation by $w_{B}$, and integrating by parts, we get respectively 
\begin{equation*}
\lambda ^{2}\int_{0}^{\ell _{j}}u^{j}\overline{w^{j}}dx+\int_{0}^{\ell
_{j}}\partial _{x}u^{j}\partial _{x}\overline{w^{j}}dx-\left. \partial
_{x}u^{j}\overline{w^{j}}\right| _{0}^{\ell _{j}}=\int_{0}^{\ell
_{j}}(g^{j}+\lambda f^{j})\overline{w^{j}}dx
\end{equation*}
for $j$ in $I_{S}$ and 
\begin{equation*}
\lambda ^{2}\int_{0}^{\ell _{j}}u^{j}\overline{w^{j}}dx+\int_{0}^{\ell
_{j}}\partial _{x}^{2}u^{j}\partial _{x}^{2}\overline{w^{j}}dx+\left.
\partial _{x}^{3}u^{j}\overline{w^{j}}\right| _{0}^{\ell _{j}}-\left.
\partial _{x}^{2}u^{j}\partial _{x}\overline{w^{j}}\right| _{0}^{\ell
_{j}}=\int_{0}^{\ell _{j}}(g^{j}+\lambda f^{j})\overline{w^{j}}dx
\end{equation*}
for $j$ in $I_{B}.$ Now summing the two obtained equations, the left hand
side will be 
\begin{eqnarray*}
&&\lambda ^{2}\sum\limits_{j=1}^{N}\int_{0}^{\ell _{j}}u^{j}\overline{w^{j}}%
dx+\sum\limits_{j\in I_{S}}\int_{0}^{\ell _{j}}\partial _{x}u^{j}\partial
_{x}\overline{w^{j}}dx+\sum\limits_{j\in I_{B}}\int_{0}^{\ell _{j}}\partial
_{x}^{2}u^{j}\partial _{x}^{2}\overline{w^{j}}dx \\
&&+\sum\limits_{a_{k}\in \mathcal{V}_{int}}\left( -\sum\limits_{j\in
I_{S}(a_{k})}d_{kj}\overline{w^{j}}(a_{k})\partial
_{x}u^{j}(a_{k})+\sum\limits_{j\in I_{B}(a_{k})}d_{kj}\overline{w^{j}}%
(a_{k})\partial _{x}^{3}u^{j}(a_{k})\right)  \\
&&+ \sum\limits_{a_{k}\in \mathcal{V}_{ext}}v^{j_{k}}(a_{k})\overline{%
w^{j_{k}}}(a_{k})+\sum\limits_{a_{k}\in \mathcal{V}_{int}}\sum\limits_{j\in
I_{B}(a_{k})}d_{kj}\partial _{x}^{2}u^{j}(a_{k})\partial _{x}\overline{w^{j}}%
(a_{k}).
\end{eqnarray*}
We find, by taking into account (\ref{s1'}) and (\ref{s2''}) 
\begin{equation}
a(u,w)=F(w) \label{pb}
\end{equation}
where 
\begin{eqnarray*}
a(u,w) &=&\lambda ^{2}\sum\limits_{j=1}^{N}\int_{0}^{\ell _{j}}u^{j}%
\overline{w^{j}}dx+\sum\limits_{j\in I_{S}}\int_{0}^{\ell _{j}}\partial
_{x}u^{j}\partial _{x}\overline{w^{j}}dx+\sum\limits_{j\in
I_{B}}\int_{0}^{\ell _{j}}\partial _{x}^{2}u^{j}\partial _{x}^{2}\overline{%
w^{j}}dx \\
&&+\lambda \sum\limits_{a_{k}\in \mathcal{V}_{ext}}u^{j_{k}}(a_{k})\overline{%
w^{j_{k}}}(a_{k})
\end{eqnarray*}
and 
\begin{equation*}
F(w)=\sum\limits_{j=1}^{N}\int_{0}^{\ell _{j}}(g^{j}+\lambda f^{j})\overline{%
w^{j}}dx+\lambda \sum\limits_{a_{k}\in \mathcal{V}_{ext}}f^{j_{k}}(a_{k})%
\overline{w^{j_{k}}}(a_{k}).
\end{equation*}
$a$ is a continuous sesquilinear form on $V\times V$ and $F$ is a continuous
anti-linear form on $V.$ Moreover, there exists $C>0$ such that, for every $w\in V$%
\begin{equation*}
\left| a(w,w)\right| \geq C\left\| w\right\| _{V}^{2},
\end{equation*}
where
\begin{equation*}
\left\| u\right\| _{V}^{2}=\sum_{j=1}^{N}\int_{0}^{\ell _{j}}\left|
u^{j}\right| ^{2}dx+\sum_{j\in I_{S}}\int_{0}^{\ell _{j}}\left| \partial
_{x}u^{j}\right| ^{2}dx+\sum_{j\in I_{B}}\int_{0}^{\ell _{j}}\left| \partial
_{x}^{2}u^{j}\right| ^{2}dx.
\end{equation*}
By the Lax-Milgram's lemma, problem (\ref{pb}) has a unique solution $u$ in $%
V.$ It is easy to verify that: $u$ belongs to $\prod\limits_{j\in
I_{S}}H^{2}(0,\ell _{j})\times \prod\limits_{j\in I_{B}}H^{4}(0,\ell _{j}),$ $%
v=\lambda u-f\in V,$ $u_{S}$ and $u_{B}$ satifies respectively (\ref{lax2})
and (\ref{lax3}), and the conditions 
\begin{equation*}
\left\{ 
\begin{tabular}{l}
$\partial _{x}u^{j_{k}}(a_{k})=-d_{kj}v^{j_{k}}(a_{k}),\;\;a_{k}\in \mathcal{V}%
_{ext}^{S},$ \\ 
$(1-\delta )\partial _{x}^{2}u^{1}(\ell _{1})=0,$ \\ 
$\partial _{x}^{2}u^{j}(a_{k})=\partial
_{x}^{2}u^{l}(a_{k}),\;\;j,l=I_{B}(a_{k}),\;\;a_{k}\in \mathcal{V}_{int},$
\\ 
$\partial _{x}^{3}u^{j_{k}}(a_{k})=d_{kj}v^{j_{k}}(a_{k}),\;\;a_{k}\in \mathcal{V}%
_{ext}^{B},$ \\ 
$\sum\limits_{j\in I_{S}(a_{k})}d_{kj}\partial _{x}u^{j}(a_{k})-\sum\limits_{j\in
I_{B}(a_{k})}d_{kj}\partial _{x}^{3}u^{j}(a_{k})=0,\;\;a_{k}\in \mathcal{V}_{int}.$%
\end{tabular}
\right. 
\end{equation*}
Furthermore 
\begin{equation*}
\left\| y\right\| _{\mathcal{H}}^{2}\leq c\left\| z\right\| _{\mathcal{H}%
}^{2},
\end{equation*}
where $c$ is a positive constant independent of $y.$ In conclusion, $%
y=(u,v)\in \mathcal{D}(\mathcal{A})$ and $(\lambda -\mathcal{A})^{-1}\in 
\mathcal{L}(\mathcal{H}),$ that is, $\lambda \in \rho (A).$
\end{proof}

\begin{corollary}
For an initial datum $y_{0}\in \mathcal{H}$ there exists a unique solution $%
y\in C([0,+\infty ),\mathcal{H})$ of problem (\ref{0500}). Moreover if $%
y_{0}\in \mathcal{D}(\mathcal{A}),$ then $y\in C([0,+\infty ),\mathcal{D}(%
\mathcal{A}))\cap C^{1}([0,+\infty ),\mathcal{H}).$
\end{corollary}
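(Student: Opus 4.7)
The corollary is a direct consequence of the preceding lemma combined with standard $C_0$-semigroup theory, so the plan is essentially to invoke the appropriate structural result from \cite{Paz83} rather than to produce a new argument.

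First, I would apply the lemma to obtain the contraction semigroup $(T(t))_{t\geq 0}$ generated by $\mathcal{A}$ on $\mathcal{H}$. Given any initial datum $y_0\in \mathcal{H}$, I would then set $y(t):=T(t)y_0$ and verify directly from the defining properties of a $C_0$-semigroup (strong continuity in $t$ and boundedness of each $T(t)$) that $y\in C([0,+\infty),\mathcal{H})$. For uniqueness, I would use the standard argument: if $\tilde y$ is another solution, then the difference $w=y-\tilde y$ satisfies $w'=\mathcal{A}w$ with $w(0)=0$, and the dissipativity $\mathrm{Re}\langle \mathcal{A}w,w\rangle_{\mathcal{H}}\leq 0$ established in the lemma yields $\tfrac{d}{dt}\|w(t)\|_{\mathcal{H}}^2\leq 0$, whence $w\equiv 0$.

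For the regularity part, I would invoke the classical theorem (see \cite{Paz83}) which states that if the initial datum lies in the domain of the generator of a $C_0$-semigroup, then the orbit $t\mapsto T(t)y_0$ is a strong (classical) solution: it belongs to $C([0,+\infty),\mathcal{D}(\mathcal{A}))$ when $\mathcal{D}(\mathcal{A})$ is equipped with the graph norm, and it is continuously differentiable into $\mathcal{H}$ with derivative $\mathcal{A}T(t)y_0=T(t)\mathcal{A}y_0$. This is exactly the second claim of the corollary.

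There is no substantial obstacle here, since all the analytic work (dissipativity, maximality, construction of the resolvent via Lax--Milgram) has already been carried out in the lemma; the only care needed is to cite the correct variant of the Hille--Yosida / Lumer--Phillips generation theorem to transfer well-posedness from the generator to the abstract Cauchy problem (\ref{0500}).
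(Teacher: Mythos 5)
Your proposal is correct and follows exactly the route the paper intends: the corollary is stated as an immediate consequence of the lemma (the paper gives no separate proof), namely that once $\mathcal{A}$ is known to generate a $\mathcal{C}_{0}$-semigroup of contractions, the standard theory from \cite{Paz83} yields well-posedness of (\ref{0500}) and the extra regularity for data in $\mathcal{D}(\mathcal{A})$. Your additional uniqueness argument via dissipativity is a harmless elaboration of what the semigroup framework already provides.
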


\begin{remark}
\label{remarque}Note that, by the Sobolev embedding theorem, $(I-\mathcal{A}%
)^{-1}$ is a compact operator and then the spectrum of $\mathcal{A}$
consists of all isolated eigenvalues, i.e., $\sigma (\mathcal{A})=\sigma
_{p}(\mathcal{A}).$
\end{remark}

\section{Asymptotic behavior}

\label{sec3}

The aim of this section is to show that the system $(\mathcal{S})$ is
asymptotically stable. Moreover, we will prove that the solution is
exponentially stable if there is no beam following a string, from the root
to the leaves, as in the first tree (Figure \ref{fig1}), and polynomially
stable if at least a beam follows a string, as in the second tree (Figure 
\ref{fig2}). Finally, The lack of exponential stability is proved on an
example.

\begin{figure}[tbp]
\begin{minipage}[b]{.48\linewidth}
  \centering\epsfig{figure=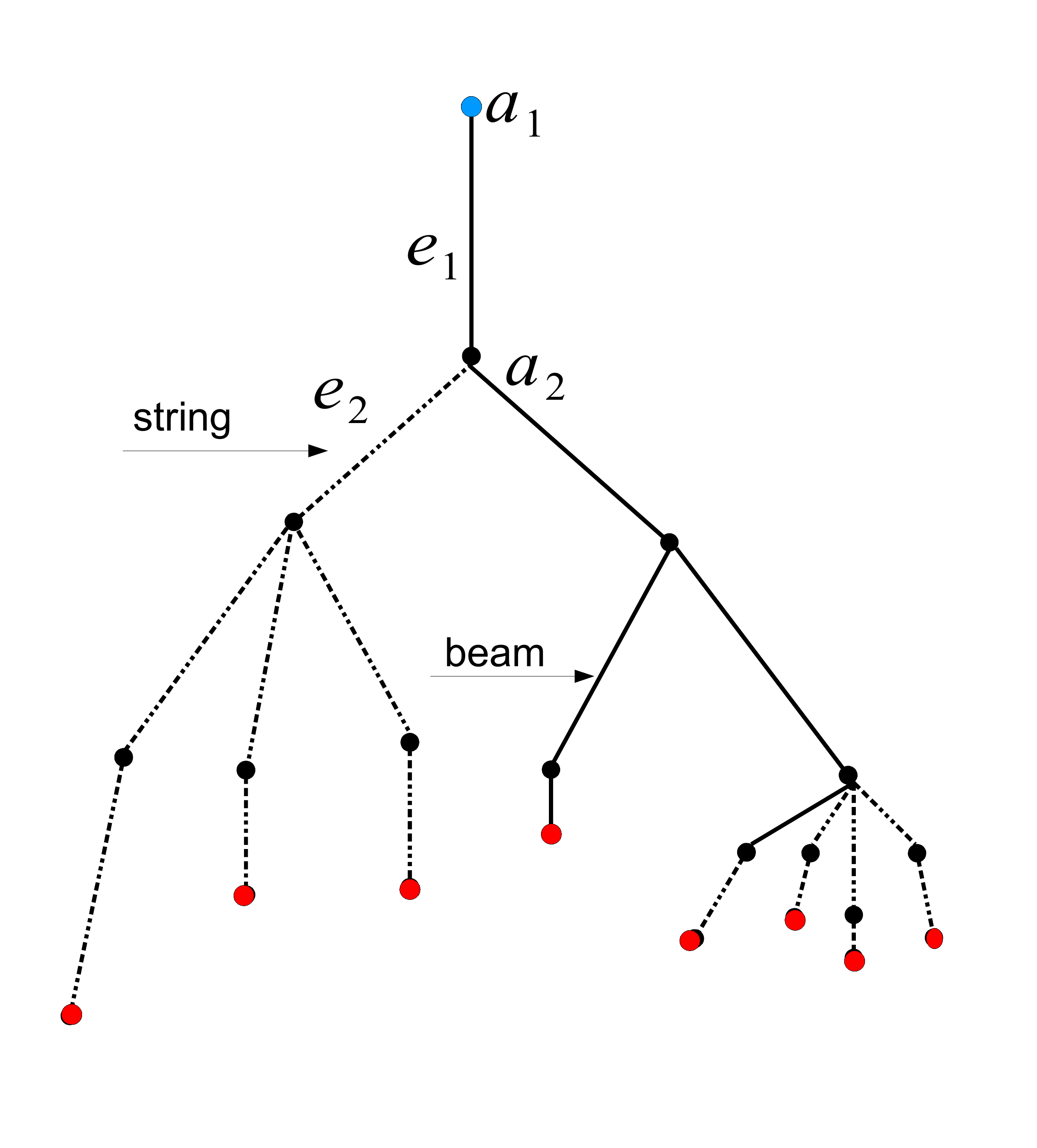,width=\linewidth}
  \caption{first tree \label{fig1}}
 \end{minipage} \hfill 
\begin{minipage}[b]{.48\linewidth}
  \centering\epsfig{figure=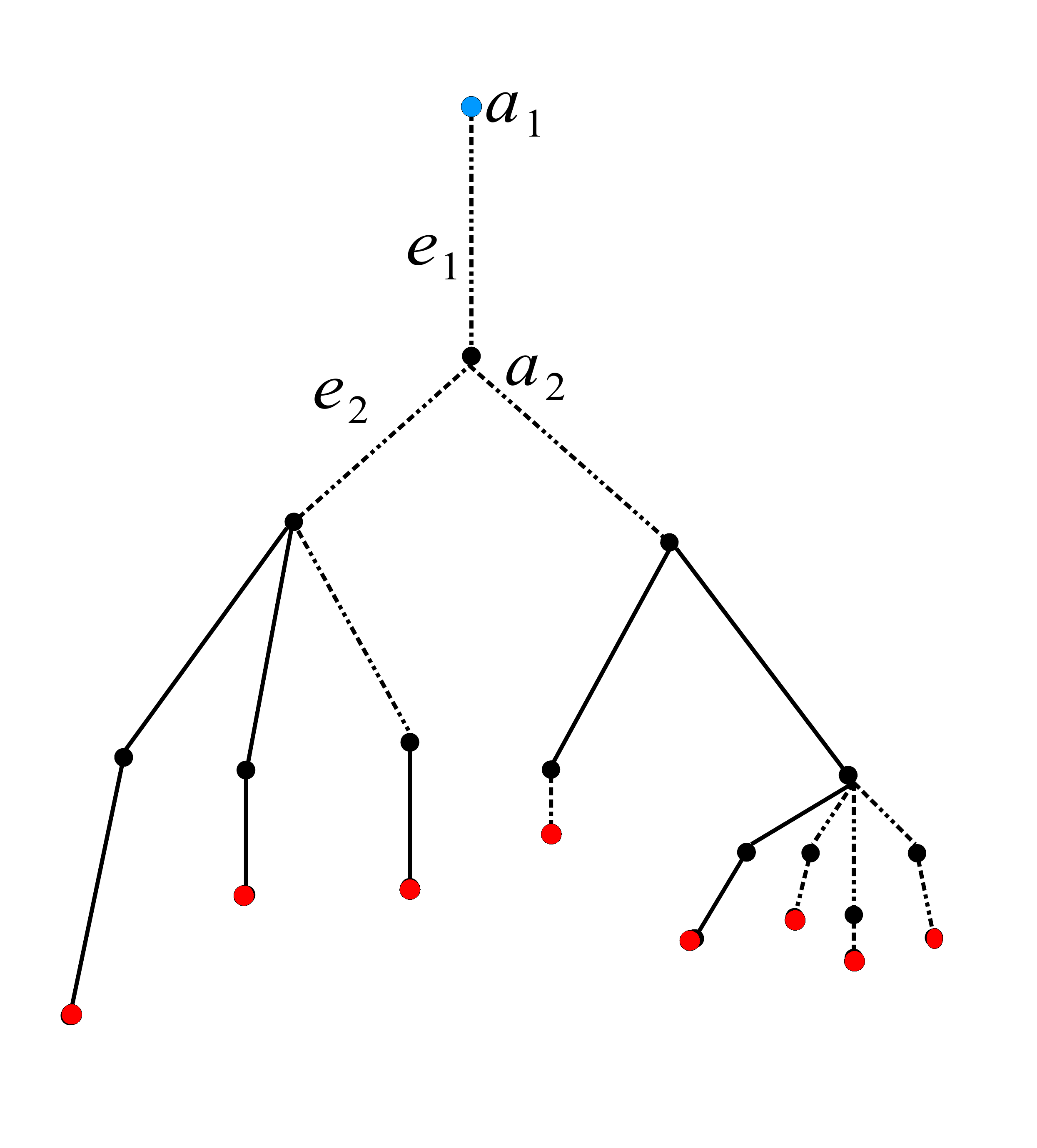,width=\linewidth}
  \caption{second~tree \label{fig2}}
 \end{minipage}
\end{figure}

We will use the following two results. The first gives us a necessary and
sufficient condition for the exponential stability of a $\mathcal{C}_{0}$%
-semigroup of contraction, for the proof see \cite{Gea78}, \cite{Hua85} or 
\cite{Pru84}.

\begin{theorem}
A $\mathcal{C}_{0}$-semigroup of contraction $e^{t\mathcal{L}}$ on a Hilbert
space is exponentially stable if, and only if, 
\begin{equation}
\mathbf{i}\mathbb{R}\cap \sigma (\mathcal{L})=\emptyset  \label{2.1}
\end{equation}
and 
\begin{equation}
\underset{\left| \beta \right| \rightarrow \infty }{\lim }\sup \left\| (%
\mathbf{i}\beta -\mathcal{L})^{-1}\right\| <\infty.  \label{2.2}
\end{equation}
\end{theorem}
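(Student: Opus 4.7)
The plan is to prove the two implications separately. The forward direction (exponential stability $\Rightarrow$ (\ref{2.1})--(\ref{2.2})) is the easy half and follows directly from the Laplace-transform representation of the resolvent, while the converse is the content of Gearhart's theorem; its natural proof uses Plancherel in $L^{2}(\mathbb{R};H)$ together with a resolvent-identity contour shift.

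For the easy direction, assume $\|e^{t\mathcal{L}}\|\leq Me^{-\omega t}$ for some $\omega>0$. Then the vector-valued integral $(\lambda-\mathcal{L})^{-1}x=\int_{0}^{\infty}e^{-\lambda t}e^{t\mathcal{L}}x\,dt$ converges absolutely for $\text{Re}\,\lambda>-\omega$ and gives $\|(\lambda-\mathcal{L})^{-1}\|\leq M/(\text{Re}\,\lambda+\omega)$; restriction to $\lambda=\mathbf{i}\beta$ yields both (\ref{2.1}) and the uniform bound (\ref{2.2}) by $M/\omega$.

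For the converse, set $M_{0}:=\sup_{\beta\in\mathbb{R}}\|(\mathbf{i}\beta-\mathcal{L})^{-1}\|$. A Neumann-series argument from the identity $\lambda-\mathcal{L}=(\mathbf{i}\beta-\mathcal{L})\bigl(I-(\mathbf{i}\beta-\lambda)(\mathbf{i}\beta-\mathcal{L})^{-1}\bigr)$ shows that the strip $\Sigma_{\eta}:=\{|\text{Re}\,\lambda|<\eta\}$ lies in $\rho(\mathcal{L})$ for any $\eta<1/(2M_{0})$, with $\|(\lambda-\mathcal{L})^{-1}\|\leq 2M_{0}$ there. Next, for $a>0$ the map $\beta\mapsto(a+\mathbf{i}\beta-\mathcal{L})^{-1}x$ is the Fourier transform in $\beta$ of $t\mapsto\mathbf{1}_{t\geq 0}\,e^{-at}e^{t\mathcal{L}}x$, so Plancherel's theorem and duality against $y\in H$ yield
\begin{equation*}
\int_{\mathbb{R}}\bigl|\langle (a+\mathbf{i}\beta-\mathcal{L})^{-1}x,y\rangle\bigr|^{2}\,d\beta=2\pi\int_{0}^{\infty}e^{-2at}\bigl|\langle e^{t\mathcal{L}}x,y\rangle\bigr|^{2}\,dt.
\end{equation*}
Because the left side is analytic in $a$ throughout $\Sigma_{\eta}$ and uniformly bounded there by $C\|x\|^{2}\|y\|^{2}$ (using the strip bound), I can let $a\to-\eta/2$ and invoke Fatou, giving $\int_{0}^{\infty}e^{\eta t}|\langle e^{t\mathcal{L}}x,y\rangle|^{2}\,dt\leq C\|x\|^{2}\|y\|^{2}$; taking the supremum over unit $y$ produces $\int_{0}^{\infty}e^{\eta t}\|e^{t\mathcal{L}}x\|^{2}\,dt\leq C\|x\|^{2}$.

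To convert this weighted integrability into a pointwise decay rate, I would use the contraction property: for $s\leq t$, $\|e^{t\mathcal{L}}x\|\leq\|e^{s\mathcal{L}}x\|$, so integrating over $s\in[t/2,t]$ gives $(t/2)\|e^{t\mathcal{L}}x\|^{2}\leq e^{-\eta t/2}\int_{0}^{\infty}e^{\eta s}\|e^{s\mathcal{L}}x\|^{2}\,ds\leq Ce^{-\eta t/2}\|x\|^{2}$. This produces a bound of order $t^{-1/2}e^{-\eta t/4}\|x\|$, which upgrades to genuine exponential decay by iterating the semigroup property $\|e^{nt\mathcal{L}}\|\leq\|e^{t\mathcal{L}}\|^{n}$. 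The principal obstacle is the contour-shift step: one must justify simultaneously the analytic extension of the resolvent into a nontrivial strip and the continuity of the Plancherel identity under the change of $a$, which is precisely where the two hypotheses (\ref{2.1}) and (\ref{2.2}) are used in an essential and coupled way.
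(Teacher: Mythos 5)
The paper itself offers no proof of this theorem; it is quoted from Gearhart, Huang and Pr\"{u}ss, so your attempt must be judged against the standard arguments. Your easy direction, the finiteness of $M_{0}$, the Neumann-series extension of the resolvent bound to a strip, the scalar Plancherel identity for $a>0$, and the final Datko-type passage from square-integrability to pointwise exponential decay are all sound. The proof breaks down, however, at the contour-shift step, and in more than one way. First, the assertion that $\int_{\mathbb{R}}\bigl|\langle(a+\mathbf{i}\beta-\mathcal{L})^{-1}x,y\rangle\bigr|^{2}\,d\beta$ is ``uniformly bounded by $C\|x\|^{2}\|y\|^{2}$ using the strip bound'' is unjustified: the strip bound is an $L^{\infty}$ bound in $\beta$, and a bounded function on $\mathbb{R}$ need not be square-integrable, so no bound on this integral follows from it. (The quantity is also not analytic in $a$, being an integral of a modulus squared.) Second, for $a<0$ the Plancherel identity you want to extend is not available: it presupposes that $t\mapsto e^{-at}\langle e^{t\mathcal{L}}x,y\rangle$ lies in $L^{1}\cap L^{2}$, which for $a=-\eta/2$ is essentially the conclusion you are trying to prove. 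Third, the weak bound $\int_{0}^{\infty}|\langle e^{t\mathcal{L}}x,y\rangle|^{2}\,dt\le C\|x\|^{2}\|y\|^{2}$ does not upgrade to $\int_{0}^{\infty}\|e^{t\mathcal{L}}x\|^{2}\,dt\le C\|x\|^{2}$ by ``taking the supremum over unit $y$'': the supremum cannot be moved inside the $t$-integral in the direction you need.

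All three defects are repaired by the standard device you are missing. Work with the $H$-valued Plancherel theorem in $L^{2}(\mathbb{R};H)$ (valid precisely because $H$ is a Hilbert space), so that for $a>0$ one has $\int_{\mathbb{R}}\|(a+\mathbf{i}\beta-\mathcal{L})^{-1}x\|^{2}\,d\beta=2\pi\int_{0}^{\infty}e^{-2at}\|e^{t\mathcal{L}}x\|^{2}\,dt$; at $a=1$ the right-hand side is at most $\pi\|x\|^{2}$ because the semigroup is a contraction. Then use the resolvent identity $(a+\mathbf{i}\beta-\mathcal{L})^{-1}=\bigl(I+(1-a)(a+\mathbf{i}\beta-\mathcal{L})^{-1}\bigr)(1+\mathbf{i}\beta-\mathcal{L})^{-1}$ together with $\sup_{0<a\le1,\ \beta\in\mathbb{R}}\|(a+\mathbf{i}\beta-\mathcal{L})^{-1}\|\le\max(2M_{0},\eta^{-1})=:M_{0}'$ (Neumann series near the imaginary axis, Hille--Yosida away from it) to obtain $\|(a+\mathbf{i}\beta-\mathcal{L})^{-1}x\|\le(1+M_{0}')\|(1+\mathbf{i}\beta-\mathcal{L})^{-1}x\|$, whose square is integrable in $\beta$ with integral at most $(1+M_{0}')^{2}\pi\|x\|^{2}$, uniformly in $0<a\le1$. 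Monotone convergence as $a\downarrow0$ then yields $\int_{0}^{\infty}\|e^{t\mathcal{L}}x\|^{2}\,dt\le C\|x\|^{2}$, and your own last paragraph (which is Datko's argument for contractions) finishes the proof; no excursion to negative $a$ is needed.
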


The second, due to \cite{Tom10} (see also \cite{Bat08} and \cite{Liu05}),
characterizes the polynomial decay of a $\mathcal{C}_{0}$-semigroup of
contraction.

\begin{theorem}
A $\mathcal{C}_{0}$-semigroup of contraction $e^{t\mathcal{L}}$ on a Hilbert
space $\mathcal{H}$ satisfies 
\begin{equation*}
\left\| e^{t\mathcal{L}}y_{0}\right\| \leq \frac{C}{t^{\frac{1}{\alpha }}}%
\left\| y_{0}\right\| _{\mathcal{D}(\mathcal{L})}
\end{equation*}
for some constant $C>0$ and for $\alpha >0$ if, and only if, (\ref{2.1})
holds and 
\begin{equation}
\underset{\left| \beta \right| \rightarrow \infty }{\lim }\sup \frac{1}{%
\beta ^{\alpha }}\left\| (\mathbf{i}\beta -\mathcal{L})^{-1}\right\| <\infty.
\label{2.2'}
\end{equation}
\end{theorem}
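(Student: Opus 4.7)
\medskip

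\noindent\textbf{Proof proposal.} The statement is the Borichev--Tomilov characterization of polynomial decay of contraction semigroups on Hilbert spaces. Since the paper takes it as a citable black-box, I will sketch the approach I would follow to establish it from scratch. The proof splits naturally into the two implications, and the sufficiency direction is where the Hilbert-space structure enters decisively via Plancherel's identity.

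For the \emph{necessity} direction, assume $\|e^{t\mathcal{L}}y_0\|\le C t^{-1/\alpha}\|y_0\|_{\mathcal{D}(\mathcal{L})}$. I would fix $\beta\in\mathbb{R}$ and use the Laplace-transform representation
\begin{equation*}
(\mathbf{i}\beta-\mathcal{L})^{-1}y_0 \;=\; -\lim_{\varepsilon\downarrow 0}\int_{0}^{\infty} e^{-(\varepsilon-\mathbf{i}\beta)t}\,e^{t\mathcal{L}}y_0\,dt,
\end{equation*}
split the integral at a cutoff $T=T(\beta)$, estimate the tail by the polynomial decay hypothesis (after trading an $\mathcal{L}$ against an $(\mathbf{i}\beta-\mathcal{L})^{-1}$ via the resolvent identity so that one works with $y_0\in\mathcal{H}$ rather than $\mathcal{D}(\mathcal{L})$), and optimize in $T$. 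This yields $\|(\mathbf{i}\beta-\mathcal{L})^{-1}\|=O(|\beta|^{\alpha})$, and in particular (\ref{2.1}) and (\ref{2.2'}).

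For the \emph{sufficiency} direction, assume (\ref{2.1}) and (\ref{2.2'}). I would first use the Gearhart--Prüss philosophy to confirm that the contraction semigroup is bounded, and then represent, for $y_0\in\mathcal{D}(\mathcal{L}^n)$ with $n$ large, the orbit by the inverse Laplace formula shifted to the imaginary axis,
\begin{equation*}
e^{t\mathcal{L}}y_0 \;=\; \frac{1}{2\pi}\int_{-\infty}^{\infty} e^{\mathbf{i}\beta t}\,(\mathbf{i}\beta-\mathcal{L})^{-1}y_0\,d\beta,
\end{equation*}
where convergence is secured by replacing $y_0$ with $\mathcal{L}^{-n}y_0$ to gain $|\beta|^{-n}$ decay from the spectral functional calculus. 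Here the central input is Plancherel's identity on $L^2(\mathbb{R};\mathcal{H})$: pairing the above representation with $e^{t\mathcal{L}}y_0$ itself and invoking (\ref{2.2'}) yields an $L^2$-in-$\beta$ estimate that, after an interpolation between $\mathcal{D}(\mathcal{L})$ and $\mathcal{H}$, produces the rate $t^{-1/\alpha}$. The converse half of the Borichev--Tomilov theorem (that a slower resolvent growth forces a slower decay) is obtained by testing the above bound on resonant approximate eigenvectors of $\mathcal{L}$.

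The main obstacle, and the genuinely Hilbertian ingredient, is the sufficiency direction: transforming the pointwise-in-$\beta$ resolvent bound (\ref{2.2'}) into a decay of the orbit requires Plancherel in an essential way (no analogous result holds on general Banach spaces with the same exponent), and one must also carefully justify the contour shift from $\mathrm{Re}\,\lambda>0$ to $\mathrm{Re}\,\lambda=0$ despite the integrand having only polynomial, rather than integrable, control. The interpolation between the smooth data class $\mathcal{D}(\mathcal{L}^n)$ and $\mathcal{D}(\mathcal{L})$ (via the moment inequality for sectorial generators) is routine once the $L^2$ estimate is in hand.
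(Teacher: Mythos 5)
First, a point of comparison: the paper does not prove this statement at all. It is imported verbatim from the literature (Borichev--Tomilov, with Batty--Duyckaerts and Liu--Rao cited alongside) and used as a black box, so there is no in-paper argument for your sketch to match; you are effectively being asked to reprove a known theorem. Judged on its own merits, your outline correctly identifies the standard architecture: the necessity direction via the Laplace representation of the resolvent, a cutoff in $t$, and optimization (this part works on any Banach space and does yield $\|(\mathbf{i}\beta-\mathcal{L})^{-1}\|=O(|\beta|^{\alpha})$ together with $\mathbf{i}\mathbb{R}\subset\rho(\mathcal{L})$ by a Neumann-series extension of the resolvent to the boundary); and the sufficiency direction via an inverse-Laplace/contour representation with Plancherel on $L^{2}(\mathbb{R};\mathcal{H})$ as the decisive Hilbert-space ingredient. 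Minor quibbles: your Laplace formula has a spurious sign and the wrong sign in the exponent (one wants $(\mathbf{i}\beta-\mathcal{L})^{-1}y_{0}=\lim_{\varepsilon\downarrow 0}\int_{0}^{\infty}e^{-(\varepsilon+\mathbf{i}\beta)t}e^{t\mathcal{L}}y_{0}\,dt$), and your closing remark about ``the converse half'' being obtained from resonant approximate eigenvectors conflates the stated equivalence with the separate question of optimality of the rate; the converse half is the necessity direction you already handled.

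The genuine gap is in the sufficiency direction, which is the entire content of the theorem. The sentence asserting that ``pairing the representation with $e^{t\mathcal{L}}y_{0}$ itself and invoking (\ref{2.2'}) yields an $L^{2}$-in-$\beta$ estimate that, after an interpolation, produces the rate $t^{-1/\alpha}$'' is a placeholder, not an argument: nothing in the sketch explains where the exponent $1/\alpha$ comes from, and a naive pairing plus Cauchy--Schwarz produces no decay in $t$ whatsoever, since $|e^{\mathbf{i}\beta t}|=1$. The actual proof requires quantitative inputs you do not mention: the two-sided quadratic (Gomilko--Shi--Feng) estimates $\int_{\mathbb{R}}\|(\omega+\mathbf{i}s-\mathcal{L})^{-1}x\|^{2}\,ds\leq(\pi/\omega)\|x\|^{2}$ for the semigroup and its adjoint, a representation of $tT(t)\mathcal{L}^{-1}$ (or $T(t)$ applied to suitably regularized data) involving the \emph{square} of the resolvent so that Cauchy--Schwarz can split it between the two quadratic estimates, and a frequency decomposition at $|\beta|\sim t^{1/\alpha}$ (equivalently, the reduction via the moment inequality to showing $\|T(t)\mathcal{L}^{-\alpha}\|=O(1/t)$). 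Without at least naming this mechanism, the sketch does not establish the implication; it restates the difficulty. If you intend to use the theorem as the paper does, cite it; if you intend to prove it, the sufficiency direction needs to be carried out, not described.
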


\subsection{Asymptotic stability}

In this section, we prove (\ref{2.1}), that is, the system $(\mathcal{S})$
is asymptotically stable.

\begin{theorem}
The semigroup $(T(t))_{t \geq 0},$ generated by the operator $\mathcal{A}$
is asymptotically stable.
\end{theorem}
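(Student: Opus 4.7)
The plan is to invoke a standard spectral criterion for asymptotic stability. By Remark~\ref{remarque}, $\sigma(\mathcal{A})=\sigma_p(\mathcal{A})$ is a discrete (hence countable) subset of $\mathbb{C}$, since $(I-\mathcal{A})^{-1}$ is compact. Combined with the fact that $(T(t))_{t\ge 0}$ is a contraction semigroup on a Hilbert space, the Arendt--Batty / Lyubich--V\~u theorem reduces strong stability to the single requirement that $\mathcal{A}$ admit no eigenvalue on the imaginary axis.

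Fix therefore $\beta\in\mathbb{R}$ and assume $\mathcal{A}y=\mathbf{i}\beta y$ for some nonzero $y=(u,v)\in\mathcal{D}(\mathcal{A})$. The dissipativity identity already obtained in the proof of the previous lemma gives
\[
0=\mathrm{Re}\langle \mathbf{i}\beta y,y\rangle_{\mathcal{H}}=\mathrm{Re}\langle \mathcal{A}y,y\rangle_{\mathcal{H}}=-\sum_{a_k\in\mathcal{V}_{ext}^{\ast}}|v^{j_k}(a_k)|^2,
\]
so $v^{j_k}(a_k)=0$ at every feedback node. The first block $v=\mathbf{i}\beta u$ of the eigenvalue equation then forces, when $\beta\neq 0$, the additional boundary value $u^{j_k}(a_k)=0$ at every leaf different from $a_1$. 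The case $\beta=0$ is treated separately by noting that $0\in\rho(\mathcal{A})$: the sesquilinear form $a$ of the previous section remains coercive on $V$ for $\lambda=0$ thanks to the Poincar\'e inequality furnished by the Dirichlet condition $f^1(a_1)=0$, so $\ker\mathcal{A}=\{0\}$.

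For $\beta\neq 0$ the componentwise equations reduce to $u^j_{xx}+\beta^2 u^j=0$ on strings and $u^j_{xxxx}-\beta^2 u^j=0$ on beams, and I would propagate the identity $u\equiv 0$ from the leaves to the root by induction on the graph distance to the leaves. On a leaf string $e_{j_k}$ the relations $u^{j_k}(a_k)=0$ and $u^{j_k}_x(a_k)=-d_{kj}v^{j_k}(a_k)=0$ form a homogeneous Cauchy problem for a second-order linear ODE, forcing $u^{j_k}\equiv 0$. On a leaf beam one has instead the three boundary data $u^{j_k}(a_k)=u^{j_k}_x(a_k)=u^{j_k}_{xxx}(a_k)=0$, which leave a one-parameter family of solutions; that remaining parameter must be eliminated by moving one step up the tree and exploiting the four transmission relations in (\ref{s1'})--(\ref{s2''}) at the attachment node, together with the data already determined on the sibling branches. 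Iterating the step provides, on each parent edge, a Cauchy problem with enough data to conclude $u\equiv 0$ on that edge as well.

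The main obstacle will be the beam step: at an interior node supporting several leaf-beam children, the free parameters contributed by those children must be cancelled simultaneously, and one has to show that the linear system assembled from continuity of $u$, continuity of $u_{xx}$ across beams, the vanishing of $\sum_{j\in I_{B}(a_k)} d_{kj}\partial_x u^{j}(a_k)$, and the string-beam equilibrium relation admits only the trivial solution. A careful induction on the depth of the tree, combined with the linear independence of the fundamental system $\{\cos,\sin,\cosh,\sinh\}(\sqrt{|\beta|}\,\cdot)$ on any interval of positive length, is what makes the induction close and yields the absence of purely imaginary eigenvalues.
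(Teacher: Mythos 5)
Your overall strategy coincides with the paper's: reduce asymptotic stability (via compactness of the resolvent and Arendt--Batty) to the absence of purely imaginary eigenvalues, dispose of $\beta=0$ by the energy identity, use the dissipation identity to get $v^{j_k}(a_k)=0$ and hence the extra boundary data at the leaves, and then propagate $u\equiv 0$ inward from the leaves. The string part of your induction is complete: two homogeneous Cauchy data for a second-order ODE do force $u^{j_k}\equiv 0$ on a leaf string, and this iterates through maximal string subgraphs exactly as in the paper.

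The genuine gap is the step you yourself flag as ``the main obstacle.'' On a leaf beam the three conditions $u^{j_k}(a_k)=\partial_x u^{j_k}(a_k)=\partial_x^3 u^{j_k}(a_k)=0$ leave a one-parameter family, and you assert that the linear system assembled at the interior nodes from (\ref{s1'}) and (\ref{s2''}) ``admits only the trivial solution,'' offering as justification the linear independence of $\{\cos,\sin,\cosh,\sinh\}(\sqrt{|\beta|}\,\cdot)$. Linear independence of the fundamental system only says the solution space of $\partial_x^4 u=\beta^2 u$ is four-dimensional; it says nothing about whether the square homogeneous system obtained by imposing the node and leaf conditions is nonsingular, and for generic boundary-value problems of this type such systems \emph{do} degenerate at special real frequencies. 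The entire content of the proof is to rule that out for every $\beta$, and this requires a quantitative, not qualitative, input. The paper supplies it in two forms: for a maximal beam subgraph it uses the Von Below matrix calculus to reduce the node conditions to $J(\varphi+\psi)=0$ with $J$ built from $\sinh^{(-1)}(\sqrt{\beta}L)$ and $\cosh(\sqrt{\beta}L)$, and shows the relevant submatrix is \emph{strictly diagonally dominant} (essentially because $\coth x>1/\sinh x$ for $x>0$), forcing $B_1=B_2=0$; for a star of beams it computes the determinant explicitly,
\begin{equation*}
\Delta=\sum_{j}\Bigl(\prod_{k\neq j}\cosh(\sqrt{\beta}\,\ell_k)\bigr)\bigl(\sin(\sqrt{\beta}\,\ell_j)+\sinh(\sqrt{\beta}\,\ell_j)\bigr)\neq 0,
\end{equation*}
which is nonzero because $\sinh x>|\sin x|$ for $x>0$. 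Without an argument of this kind your induction does not close at any interior node carrying beams; moreover, a purely local node-by-node elimination is delicate here because the undetermined parameters on sibling beam branches interact through the transmission conditions, which is why the paper treats whole maximal beam subgraphs at once rather than one edge at a time.
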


\begin{proof}
It suffices to show that (\ref{2.1}) holds, otherwise, by taking into
account Remark \ref{remarque}, there is a real number $\beta ,$ such that 
$\lambda :=i\beta $ is an eigenvalue of $\mathcal{A}.$ Let $y=(u,v)$ the
corresponding eigenvector. We have 
\begin{equation}
\left\{ 
\begin{tabular}{l}
$
\begin{tabular}{l}
$v^{j}=\lambda u^{j}$ \ \ \ \ \ \ for $j$ in $\{1,...,N\},$ \\ 
$\partial _{x}^{2}u^{j}=\lambda v^{j}$ \ \ for $j$ in $I_{S},$ \\ 
$-\partial _{x}^{4}u^{j}=\lambda v^{j}$ \ \ for $j$ in $I_{B}.$%
\end{tabular}
$%
\end{tabular}
\right.  \label{8.2}
\end{equation}

If $\lambda =0,$ multiplying the second and the third equations of (\ref{8.2}) by $u^{j}$ and summing, we obtain, using (\ref{s1'}) and (\ref{s2''}), 
\begin{equation*}
\sum_{j\in I_{S}}\left\| \partial _{x}u^{j}\right\| ^{2}+\sum_{j\in
I_{B}}\left\| \partial _{x}^{2}u^{j}\right\| ^{2}=0
\end{equation*}
which implies that $u=0$ and hence $v=0.$ Then, in the sequal, we suppose
that $\lambda \neq 0.$

Taking the real part of the inner product of $\lambda y-\mathcal{A}y=0$ with 
$y$ in $\mathcal{H},$ we obtain 
\begin{equation*}
Re(\left\langle \mathcal{A}y,y\right\rangle _{\mathcal{H}})=-\sum%
\limits_{a_{k}\in \mathcal{V}_{ext}^{\ast }}\left| v^{j_{k}}(a_{k})\right|
^{2}=0.
\end{equation*}

Thus $v^{j_{k}}(a_{k})=0$ for $a_{k}\in \mathcal{V}_{ext}^{\ast }$ and then $u^{j_{k}}(a_{k})=0$ for $a_{k}\in \mathcal{V}_{ext}^{\ast },$ which holds
for $k=1$; $\partial _{x}^{3}u^{j_{k}}(a_{k})=0$ for $a_{k}\in 
\mathcal{V}_{ext}^{B}$ and $\partial _{x}u^{j_{k}}(a_{k})=0$ for $a_{k}\in 
\mathcal{V}_{ext}^{S}.$

Then, $u$ is zero on every edge attached to a leaf, and by iteration, on every maximal subgraph of strings not followed by beams.

Now let $\mathcal{G}^{\prime }$ a maximal subgraph of beams not followed by strings. We want to prove that  $u$ is zero on $\mathcal{G}^{\prime }$. 

\textit{First case}: $\mathcal{G}^{\prime }=\mathcal{G}$. For each $j$ in $\{1,...,N\}$, substituting the first equation of (\ref{8.2})
into the third, we obtain, 
\begin{equation}
\partial _{x}^{4}u^{j}+\lambda ^{2}u^{j}=0.  \label{sys1}
\end{equation}

For the sequel, we use a matrix method \cite{Bel85}. First of all we shall
introduce some definitions and notations used in \cite{Bel85} (see also \cite
{Abd12}).

The adjacency matrix $E=(e_{jk})_{p\times p}$ of $\mathcal{G}$ is defined by 
\begin{equation*}
e_{jk}=\left\{ 
\begin{tabular}{l}
$1$ if $a_{j}$ and $a_{k}$ are adjacent, \\ 
$0$ otherwise.
\end{tabular}
\right.
\end{equation*}
The Hadamard product of two matrices $A=(a_{jk})_{p\times p}$ and $%
B=(b_{jk})_{p\times p}$ is the matrix $A\ast B=(a_{jk}b_{jk})_{p\times p}$.
For a function $r:\mathbb{R\rightarrow R},$ we define the matrix $%
r(A)=(r_{jk})_{p\times p}$ by 
\begin{equation*}
r_{jk}=\left\{ 
\begin{tabular}{l}
$r(a_{jk})$ if $e_{jk}=1$, \\ 
$0$ otherwise,
\end{tabular}
\right.
\end{equation*}
in particular, if $r(x)=x^{q}$ then, we write $A^{(q)}$ instead of $r(A).$
Furthermore, the matrix $L=(\ell _{jk})_{p\times p}$ is defined as follows 
\begin{equation*}
\ell _{jk}=\left\{ 
\begin{tabular}{l}
$\ell _{s(j,k)}$ if $e_{jk}=1$, \\ 
$0$ otherwise,
\end{tabular}
\right.
\end{equation*}
where $s(j,k)$ is the indice of the edge connecting $a_{j}$ to $a_{k}.$

To a function $f$ on $\mathcal{G}$ is associated the matrix function $F$
defined by 
\begin{equation*}
F:[0,1]\longrightarrow \mathbb{C}^{p\times p},x\longmapsto
F(x)=(f_{jk}(x))_{p\times p},
\end{equation*}
with 
\begin{equation*}
f_{jk}(x)=e_{jk}f_{s(j,k)}\left[ \ell _{s(j,k)}\left( \dfrac{1+d_{js(j,k)}}{2%
}-xd_{js(j,k)}\right) \right] .
\end{equation*}

The system (\ref{sys1}) is then rewritten as 
\begin{equation}
L^{(-4)}\ast U^{\prime \prime \prime \prime }+\lambda U=0.  \label{sys2}
\end{equation}

Integrate equation (\ref{sys2}) we obtain 
\begin{equation}
U(x)=A_{1}\ast \cos (\sqrt{\beta }Lx)+A_{2}\ast \sin (\sqrt{\beta }%
Lx)+B_{1}\ast \cosh (\sqrt{\beta }Lx)+B_{2}\ast \sinh (\sqrt{\beta }Lx),
\label{eq0}
\end{equation}
where without loss of generality we have supposed that $\beta >0$ and with $%
A_{1},A_{2},B_{1},B_{2}\in \mathbb{C}^{p\times p}.$ Then 
\begin{eqnarray}
L^{(-1)}\ast U^{\prime } &=&\sqrt{\beta }\left( -A_{1}\ast \sin (\sqrt{\beta }%
Lx)+A_{2}\ast \cos (\sqrt{\beta }Lx)\right.  \notag \\
&&\left. +B_{1}\ast \sinh (\sqrt{\beta }x)+B_{2}\ast \cosh (\sqrt{\beta }
Lx)\right) ,  \label{eq1} \\
L^{(-2)}\ast U^{\prime \prime } &=&\beta \left( -A_{1}\ast \cos (\sqrt{\beta }Lx)-A_{2}\ast
\sin (\sqrt{\beta }Lx)\right.  \notag \\
&&\left. +B_{1}\ast \cosh (\sqrt{\beta }x)+B_{2}\ast \sinh (\sqrt{\beta }
Lx)\right) ,  \label{eq2} \\
L^{(-3)}\ast U^{\prime \prime \prime } &=&\beta ^{3/2}\left( A_{1}\ast \sin (\sqrt{\beta }
Lx)-A_{2}\ast \cos (\sqrt{\beta }Lx)\right.  \notag \\
&&\left. + B_{1}\ast \sinh (\sqrt{\beta }x)+B_{2}\ast \cosh (\sqrt{\beta }
Lx)\right) .  \label{eq3}
\end{eqnarray}
The function $U$ satisfies also, 
\begin{equation}
U(1-x)=U(x)^{T}.  \label{h5}
\end{equation}

The boundary and transmission conditions can be expressed as follows:\newline
For the continuity condition of $u$ at the inner nodes, there exists $%
\varphi =\left( 
\begin{array}{c}
\varphi _{1} \\ 
\vdots  \\ 
\varphi _{p}
\end{array}
\right) \in \mathbb{C}^{p}$ such that 
\begin{equation}
U(0)=(\varphi e^{T})\ast E,  \label{h1}
\end{equation}
where $e=\left( 
\begin{array}{c}
1 \\ 
\vdots  \\ 
1
\end{array}
\right) \in \mathbb{R}^{p}.$

Since $u$ is zero at all external nodes then $\varphi _{j}=0$ when $a_{j}$
is an external node.

The continuity condition of $\partial _{x}^{2}u$ at the interior nodes and
the fact that $\partial _{x}^{2}u$ is zero at the root can be expressed in
this manner,

there exists $\psi =\left( 
\begin{array}{c}
\psi _{1} \\ 
\vdots  \\ 
\psi _{p}
\end{array}
\right) \in \mathbb{C}^{p}$ such that $\psi _{1}=0$, and 
\begin{equation}
L^{(-2)}\ast U^{\prime \prime }(0)=(\psi e^{T})\ast E.  \label{h2}
\end{equation}

The forth condition of (\ref{s1'}) and the fifth of (\ref{s2''}) applied to $u$ are expressed respectively as follows 
\begin{equation}
(L^{(-1)}\ast U^{\prime }(0)\ast E^{\ast })e=0  \label{h3}
\end{equation}
and 
\begin{equation}
(L^{(-3)}\ast U^{\prime \prime \prime }(0)\ast E^{\ast })e=0  \label{h4}
\end{equation}
where $E^{\ast }$ is obtained from $E$ by annulling the first line.

Substitute (\ref{h1}-\ref{h2}) in (\ref{eq1}-\ref{eq3}), and (\ref{h3}-\ref
{h4}) in (\ref{eq0}-\ref{eq2}) leads to 
\begin{eqnarray}
A_{1} &=&\frac{1}{2}(U(0)-L^{(-2)}\ast U^{\prime \prime }(0))=\frac{1}{2}%
((\varphi -\psi )e^{T})\ast E,  \label{h6} \\
B_{1} &=&\frac{1}{2}(U(0)+L^{(-2)}\ast U^{\prime \prime }(0))=\frac{1}{2}%
((\varphi +\psi )e^{T})\ast E,  \label{h7}
\end{eqnarray}
and 
\begin{eqnarray}
(A_{2}\ast E^{\ast })e &=&0,  \label{h8} \\
(B_{2}\ast E^{\ast })e &=&0.  \label{h9}
\end{eqnarray}

By taking $x=1$ in (\ref{eq0}) and (\ref{eq2}) and using (\ref{h5}) we get,
by combining the two obtained equations 
\begin{equation}
\sinh ^{(-1)}(\sqrt{\beta }L)\ast (B_{1}^{T}-B_{1}\cosh (\sqrt{\beta }
L))=B_{2}.  \label{eq5_1}
\end{equation}
Multiplying, in the Hadamard product, the above equation by $E^{\ast },$ we
get, using (\ref{h9}) 
\begin{equation}
\left( \sinh ^{(-1)}(\sqrt{\beta }L)\ast (B_{1}^{T}-B_{1}\cosh (\sqrt{\beta }
L))\ast E^{\ast }\right) e=0,  \label{eq5}
\end{equation}
We recall the following elementary rules for a matrix $M\in \mathbb{C}
^{p\times p}$ (see \cite{Dek00}), 
\begin{equation}
(M\ast B_{1}^{T})e=M(\varphi +\psi ),\text{ \ \ }(M\ast B_{1})e=diag(Me)(\varphi
+\psi ).  \label{77}
\end{equation}
Then, (\ref{eq5}) implies 
\begin{equation*}
J(\varphi +\psi )=0,
\end{equation*}
where 
\begin{equation*}
J=\sinh ^{(-1)}(\sqrt{\beta }L)\ast E^{\ast }-diag\left[ \left( \sinh
^{(-1)}(\sqrt{\beta }L)\ast \cosh (\sqrt{\beta }L)\ast E^{\ast }\right) e%
\right].
\end{equation*}

The matrix, obtained from $J\ast E^{\ast T}\ast E^{\ast }$ by removing rows
and colums that are zero, is a strictly diagonally dominant matrix. Since $
\varphi _{1}=\psi _{1}=0,$ this implies that the vector $\varphi +\psi ,$
and hence the matrix $B_{1},$ is zero. Return to (\ref{eq5_1}) we deduce that $
B_{2}=0.$

For $j=1,...,N,$ the expression of $u^{j}$ is then 
\begin{equation*}
u^{j}(x)=a_{1}^{j}\cos (\sqrt{\beta }x)+a_{2}^{j}\sin (\sqrt{\beta }%
x),\;\;(a_{1}^{j},a_{2}^{j}\in \mathbb{C})
\end{equation*}
which easily implies, using the transmissions conditions and the fact that $%
u $ and $\partial _{x}u$ vanish at the leaves, that 
\begin{equation*}
u=0.
\end{equation*}

\textit{Second case}: $\mathcal{G}^{\prime }\neq \mathcal{G}$. Let $a^{\prime }$ be the nearest node of $\mathcal{G}^{\prime }$ to $a_{1}$. Then $a^{\prime }$
is an end of at least one string.

For simplicity of notations we will suppose, in this part, that $a^{\prime }=a_{1}$  and $\mathcal{G}
^{\prime }=\mathcal{G}$ but with boundary conditions at $a_{1}$:
\begin{equation*}
\left\{ 
\begin{tabular}{l}
$u^{j}(a_{1})=u^{l}(a_{1})\;\ j,l\in I(a_{1}),$ \\ 
$\partial _{x}^{2}u^{j}(a_{1})=\partial _{x}^{2}u^{l}(a_{1})\;\ j,l\in I(a_{1}),$ \\ 
$\sum_{j\in I_{B}(a_{1})}d_{1j}\partial _{x}u^{j}(a_{1})=0\ .$
\end{tabular}
\right. 
\end{equation*}

As for the first case, there is $\psi =\left( 
\begin{array}{c}
\psi _{1} \\ 
\vdots  \\ 
\psi _{p}
\end{array}
\right) $ in $\mathbb{C}^{p}$ with $\varphi _{j}=0$ when $a_{j}\in \mathcal{V
}_{ext}^{\ast }$, such that 
\begin{equation*}
U(0)=(\varphi e^{T})\ast E
\end{equation*}
and 
\begin{equation*}
L^{(-2)}\ast U^{\prime \prime }(0)=(\psi e^{T})\ast E.
\end{equation*}
The third and forth conditions of (\ref{s1'}) and the fifth of (\ref{s2''}) applied to $
u$ are expressed as follows :
\begin{equation*}
(L^{(-1)}\ast U^{\prime }(0)\ast E)e=0
\end{equation*}
and 
\begin{equation*}
(L^{(-3)}\ast U^{\prime \prime \prime }(0)\ast E^{\ast })e=0.
\end{equation*}

As in the first case we obtain (\ref{h6}), (\ref{h7}), (\ref{h8}), (\ref{h9}) and (\ref{eq5_1}). Moreover, \\ $(\varphi _{k}+\psi _{k})_{k=2,...,p}$ will
be the trivial solution of a homogeneous linear system whose matrix is
invertible. Then $B_{1}\ast E^{\ast }$ and $B_{2}\ast E^{\ast T}\ast E^{\ast
}$ are zero.

For $j\in \{1,...,N\}-I(a_{1})$ the expression of $u^{j}$ is then 
\begin{equation*}
u^{j}(x)=a_{1}^{j}\cos (\sqrt{\beta }x)+a_{2}^{j}\sin (\sqrt{\beta }
x),\;\;(a_{1}^{j},a_{2}^{j}\in \mathbb{C}),
\end{equation*}
which easily implies, by using the transmissions conditions and the fact
that $u$ and $\partial _{x}u$ vanish at the leaves, 
\begin{equation*}
u^{j}=0.
\end{equation*}
Then we can suppose that $\mathcal{G}^{\prime }$ is a start of beams $e_{j},$
$j\in I(a_{1}).$ Without loss of generality, we identify $e_{j}$ with $%
(0,\ell _{j})$ by taking $\pi _{j}(0)=a_{1}.$ In such case we have the
following system : 
\begin{equation*}
\left\{ 
\begin{tabular}{l}
$u^{j}(\ell _{j})=0,\;j\in I(a_{1}),$ \\ 
$\partial _{x}u^{j}(\ell _{j})=0$ and $\partial _{x}^{3}u^{j}(\ell
_{j})=0,\;j\in I(a_{1}),$ \\ 
$u^{j}(0)=u^{l}(0)$ and $\partial _{x}^{2}u^{j}(0)=\partial
_{x}^{2}u^{l}(0),\;j,l\in I(a_{1}),$ \\ 
$\sum\limits_{j} \partial _{x}u^{j}(0)=0.$
\end{tabular}
\right.
\end{equation*}
Which implies 
\begin{equation*}
\left\{ 
\begin{tabular}{l}
$a_{1}^{j}\cos (\sqrt{\beta }\ell _{j})+a_{2}^{j}\sin (\sqrt{\beta }\ell
_{j})+b_{1}^{j}\cosh (\sqrt{\beta }\ell _{j})+b_{2}^{j}\sinh (\sqrt{\beta }
\ell _{j})=0,\;j\in I(a_{1}),$ \\ 
$-a_{1}^{j}\sin (\sqrt{\beta }\ell _{j})+a_{2}^{j}\cos (\sqrt{\beta }\ell
_{j})=0,\;j\in I(a_{1}),$ \\ 
$b_{1}^{j}\sinh (\sqrt{\beta }\ell _{j})+b_{2}^{j}\cosh (\sqrt{\beta }\ell
_{j})=0,\;j\in I(a_{1}),$ \\ 
$b_{1}^{j}=b_{1}^{l},\;j,l\in I(a_{1}),$ \\ 
$\sum\limits_{j} a_{2}^{j}+b_{2}^{j}=0.$%
\end{tabular}
\right.
\end{equation*}
The discriminant of the above system is 
\begin{equation*}
\Delta =\sum\limits_{j}\left( \prod\limits_{k\neq j}\cosh (\sqrt{\beta }\ell
_{k})(\sin (\sqrt{\beta }\ell _{j})+\sinh (\sqrt{\beta }\ell _{j}))\right)
\end{equation*}
which is different from zero. We conclude that $u^{j}=0,\;j\in I(a_{1}).$ That is, $u$
is null on $\mathcal{G}^{\prime }$.

By iteration, and using transmission conditions we conclude that $u$ is zero on $\mathcal{G}$. 
The above discussion is sufficient to conclude that $y=0,$ which contradicts
the fact that $y\neq 0.$
\end{proof}

\subsection{Exponential stability}

In this section we suppose that there are no beam following a string (from the root to leaves), that is to say  on every tree branch there is no beam between a string and a leaf (Figure 
\ref{fig1}). We prove that the solution of the whole system $(\mathcal{S}%
)$ is exponentially stable.

\begin{theorem}
\label{th1}\label{theo}If there are no beam following a string ( from the
root to leaves) then, the system $(\mathcal{S})$ is exponentially stable.
\end{theorem}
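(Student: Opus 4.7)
The plan is to invoke the Gearhart--Huang--Pr\"uss theorem stated just above. The condition $i\mathbb{R}\subset\rho(\mathcal{A})$, that is (\ref{2.1}), has already been established in the previous subsection, so only the uniform resolvent bound (\ref{2.2}) remains to be shown. I would argue by contradiction: assume (\ref{2.2}) fails, so there exist sequences $\beta_n\in\mathbb{R}$ with $|\beta_n|\to\infty$ and $y_n=(u_n,v_n)\in\mathcal{D}(\mathcal{A})$ normalized by $\|y_n\|_{\mathcal{H}}=1$ and satisfying $(i\beta_n-\mathcal{A})y_n=:(f_n,g_n)\to 0$ in $\mathcal{H}$.

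First I would extract boundary information from the dissipation identity. Taking the real part of $\langle (i\beta_n-\mathcal{A})y_n,y_n\rangle_{\mathcal{H}}$, exactly as in the proof of the generation lemma, gives $v_n^{j_k}(a_k)\to 0$ for every $a_k\in\mathcal{V}_{ext}^{\ast}$. Using $v_n=i\beta_n u_n-f_n$ together with the feedback conditions contained in $\mathcal{D}(\mathcal{A})$, one then deduces $\beta_n u_n^{j_k}(a_k)=o(1)$ and $\partial_x u_n^{j_k}(a_k)=o(1)$ at every string leaf, and the corresponding vanishing of $\partial_x^{3}u_n^{j_k}(a_k)$ (alongside the prescribed $\partial_x u_n^{j_k}(a_k)=0$) at every beam leaf.

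The main step is then an edge-by-edge multiplier argument propagating this smallness from the leaves toward the root. On a string edge the resolvent equation rewrites as $-\beta_n^{2}u_n^{j}-\partial_x^{2}u_n^{j}=g_n^{j}+i\beta_n f_n^{j}$; multiplying by $(x-c_j)\partial_x\overline{u_n^{j}}$ and integrating twice by parts yields an identity controlling $\int_0^{\ell_j}(|\partial_x u_n^{j}|^{2}+\beta_n^{2}|u_n^{j}|^{2})\,dx$ by the traces of $u_n^{j}$ and $\partial_x u_n^{j}$ at the two endpoints of $e_j$, plus an $o(1)$ remainder. On a beam edge the equation is $\beta_n^{2}u_n^{j}-\partial_x^{4}u_n^{j}=g_n^{j}+i\beta_n f_n^{j}$, and an analogous weighted multiplier produces an identity controlling $\int_0^{\ell_j}(|\partial_x^{2}u_n^{j}|^{2}+\beta_n^{2}|u_n^{j}|^{2})\,dx$ by the endpoint traces of $u_n^{j},\partial_x u_n^{j},\partial_x^{2}u_n^{j},\partial_x^{3}u_n^{j}$. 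I would then induct from leaves to root: the base case handles edges adjacent to leaves, where by the previous paragraph the traces at the leaf end are already $o(1)$; the inductive step uses the transmission conditions (\ref{s1'})--(\ref{s2''}) at each interior node to transfer the smallness of the traces from the already-treated distal edges to the next proximal edge. Summing these estimates over all edges will contradict $\|y_n\|_{\mathcal{H}}=1$.

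The hypothesis that no beam follows a string is decisive here: it guarantees that, as the induction moves root-ward, each interior node is either a pure-string node, a pure-beam node, or the unique interface on a given branch where the distal string zone meets the proximal beam zone. The hard part will be this last, mixed case. At such a node the strings on the leaf side have already been brought under control by the inductive hypothesis, and one must upgrade the string-side traces of $u^{j}$ and $\partial_x u^{j}$ into bounds for the beam-side traces of $u^{j},\partial_x u^{j},\partial_x^{2}u^{j},\partial_x^{3}u^{j}$ that will feed the next beam multiplier identity. This requires simultaneously using the continuity of $u$, the beam Kirchhoff relation $\sum_{j\in I_B(a_k)}d_{kj}\partial_x u^{j}(a_k)=0$, and the mixed balance $\sum_{j\in I_B(a_k)}d_{kj}\partial_x^{3}u^{j}(a_k)-\sum_{j\in I_S(a_k)}d_{kj}\partial_x u^{j}(a_k)=0$, together with the continuity of $\partial_x^{2}u^{j}$ across beam edges. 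The geometric assumption on the position of beams is exactly what prevents the reverse configuration (a beam downstream of a string), which would force us to pass beam-type data outward through a string interface---the failure mechanism responsible for the merely polynomial decay in the next subsection.
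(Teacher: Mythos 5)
Your setup (Gearhart--Huang--Pr\"uss, contradiction sequence $\beta_n\to\infty$, $\|y_n\|=1$, $(i\beta_n-\mathcal{A})y_n\to 0$, extraction of leaf traces from the dissipation identity, and leaf-to-root propagation via multipliers of the form $q\,\partial_x\overline{u_n^j}$ with affine $q$) matches the paper, and the string part of the argument does close the way you describe, because there the boundary terms are perfect squares. The gap is in your inductive step on beam edges. The identity obtained from multiplying $-\beta_n^2u_n^j+\partial_x^4u_n^j=g_n^j+i\beta_nf_n^j$ by $q\,\partial_x\overline{u_n^j}$ controls $\int(\beta_n^2|u_n^j|^2+|\partial_x^2u_n^j|^2)$ only up to the sign-indefinite boundary cross-terms $Re\bigl(\partial_x^3u_n^j\,q\,\partial_x\overline{u_n^j}\bigr)$ and $Re\bigl(\partial_x^2u_n^j\,\partial_xq\,\partial_x\overline{u_n^j}\bigr)$ at the root-side endpoint, where the individual traces are a priori only of size $\beta_n^{3/2}$ and $\beta_n^{1/2}$ respectively; "using the transmission conditions to transfer smallness" does not dispose of them, since the Kirchhoff relations only give linear constraints among the traces of all edges meeting at the node, not smallness of each. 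This obstruction already occurs at a pure-beam interior node, not only at the string/beam interface you single out as the hard case.

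The paper's proof supplies two ingredients that your plan is missing. First, it tests the beam equation against the exponential weights $\beta_n^{-1/2}e^{-\beta_n^{1/2}x}$ and $\beta_n^{-1/2}e^{-\beta_n^{1/2}(\ell_j-x)}$; this yields, at a beam leaf, that $\partial_x^2u_n^{j_k}(a_k)\to 0$ (a trace not given by the feedback), and at an interior node $a_s$, after summing over $j\in I(a_s)$ and invoking the continuity and Kirchhoff conditions, the asymptotic relations $\partial_x^2u_n^j(a_s)+\beta_nu_n^j(a_s)\to 0$ and $\beta_n^{-1/2}\partial_x^3u_n^j(a_s)+\beta_n^{1/2}\partial_xu_n^j(a_s)\to 0$. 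Second, it combines these relations with the $q\equiv 1$ multiplier identity and Young's inequality (the "framing" with $a=b=\tfrac12$) to squeeze the node quantities between two expressions tending to zero, which forces $\beta_n^2|u_n^j(a_s)|^2$, $\beta_n|\partial_xu_n^j(a_s)|^2$, $|\partial_x^2u_n^j(a_s)|^2$ and $\beta_n^{-1}|\partial_x^3u_n^j(a_s)|^2$ to vanish separately; only then do the cross-terms in the affine-$q$ identity disappear and the edge energy tends to zero, allowing the iteration to advance one edge rootward. Without these two devices your induction stalls at the first interior beam node, so as written the proposal does not constitute a proof.
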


\begin{proof}
It suffices to prove that (\ref{2.2}) holds. Suppose the conclusion is
false. Then there exists a sequence $(\beta _{n})$ of real numbers, without
loss of generality, with $\beta _{n}\longrightarrow +\infty $, and a
sequence of vectors $(y_{n})=(u_{n},v_{n})$ in $\mathcal{D}(\mathcal{A})$
with $\left\| y_{n}\right\| _{\mathcal{H}}=1$, such that 
\begin{equation*}
\left\| (\mathbf{i}\beta _{n}I-\mathcal{A})y_{n}\right\| _{\mathcal{H}
}\longrightarrow 0
\end{equation*}
which is equivalent to 
\begin{eqnarray}
\mathbf{i}\beta _{n}u_{n}^{j}-v_{n}^{j} &=&f_{n}^{j}\longrightarrow
0,\;\;\;in\;H^{1}(0,\ell _{j}),\;\;j\ \text{in }I_{S},  \label{8.6} \\
\mathbf{i}\beta _{n}u_{n}^{j}-v_{n}^{j} &=&f_{n}^{j}\longrightarrow
0,\;\;\;in\;H^{2}(0,\ell _{j}),\;\;j\ \text{in }I_{B}, \\
\mathbf{i}\beta _{n}v_{n}^{j}-\partial _{x}^{2}u_{n}^{j}
&=&g_{n}^{j}\longrightarrow 0,\;\;\;in\;L^{2}(0,\ell _{j}),\;\;j\ \text{in }%
I_{S}, \\
\mathbf{i}\beta _{n}v_{n}^{j}+\partial _{x}^{4}u_{n}^{j}
&=&g_{n}^{j}\longrightarrow 0,\;\;\;in\;L^{2}(0,\ell _{j}),\;\;j\ \text{in }
I_{B}.  \label{8.7}
\end{eqnarray}
Then 
\begin{eqnarray}
-\beta _{n}^{2}u_{n}^{j}-\partial _{x}^{2}u_{n}^{j} &=&g_{n}^{j}+\mathbf{i}%
\beta _{n}f_{n}^{j},\;\;j\ \text{in }I_{S},  \label{2.13} \\
-\beta _{n}^{2}u_{n}^{j}+\partial _{x}^{4}u_{n}^{j} &=&g_{n}^{j}+\mathbf{i}%
\beta _{n}f_{n}^{j},\;\;j\ \text{in }I_{B}  \label{2.13'}
\end{eqnarray}
and 
\begin{equation*}
\left\| v_{n}^{j}\right\| ^{2}-\beta _{n}^{2}\left\| u_{n}^{j}\right\|
^{2}\longrightarrow 0,\;\;j=1,...,N.
\end{equation*}

First, Since 
\begin{equation*}
Re(\left\langle (\mathbf{i}\beta _{n}-\mathcal{A})y_{n},y_{n}\right\rangle _{%
\mathcal{H}})=\sum\limits_{a_{k}\in \mathcal{V}_{ext}^{\ast }}\left|
v^{j_{k}}(a_{k})\right| ^{2},
\end{equation*}
we obtain 
\begin{equation*}
\left| v_{n}^{j_{k}}(a_{k})\right| \longrightarrow 0,\;\text{for }j\in 
\mathcal{V}_{ext}^{\ast }.
\end{equation*}
Then $\beta _{n}u_{n}^{j_{k}}(a_{k})\longrightarrow 0$ for $a_{k}\in 
\mathcal{V}_{ext}^{\ast },$ $\partial
_{x}^{3}u_{n}^{j_{k}}(a_{k})\longrightarrow 0$ for $a_{k}\in \mathcal{V}%
_{ext}^{B}$ and $\partial _{x}u_{n}^{j_{k}}(a_{k})\longrightarrow 0$ for $%
a_{k}\in \mathcal{V}_{ext}^{S},$ and recall that $\partial
_{x}u_{n}^{j_{k}}(a_{k})=0$ for $j\in \mathcal{V}_{ext}^{B}$.

Let $a_{k}\in \mathcal{V}_{ext}^{S}$ and $q$ a function in $C^{1}([0,\ell
_{j_{k}}],\mathbb{C})$. The real part of the inner product of (\ref{2.13})
with $q\partial _{x}u_{n}^{j_{k}}$ gives 
\begin{gather}
\left. \frac{1}{2}\beta _{n}^{2}\left| u_{n}^{j_{k}}(x)\right|
^{2}q(x)\right| _{0}^{\ell _{j_{k}}}+\left. \frac{1}{2}\left| \partial
_{x}u_{n}^{j_{k}}(x)\right| ^{2}q(x)\right| _{0}^{\ell _{j_{k}}}  \notag \\
-\frac{1}{2}\int_{0}^{\ell _{j_{k}}}\left( \left| \partial
_{x}u_{n}^{j_{k}}(x)\right| ^{2}+\beta _{n}^{2}\left|
u_{n}^{j_{k}}(x)\right| ^{2}\right) \partial_{x}q(x)dx  \notag \\
+\left. Re\left( i\beta _{n}f_{n}^{j_{k}}(x)q(x)\overline{u_{n}^{j_{k}}}%
(x)\right) \right| _{0}^{\ell _{j_{k}}}\longrightarrow 0.  \label{pr}
\end{gather}
With $q(x)=x$ or $q(x)=\ell _{j_{k}}-x$ we can deduce easily that 
\begin{equation}
-\frac{1}{2}\int_{0}^{\ell _{j_{k}}}\left( \left| \partial
_{x}u_{n}^{j_{k}}(x)\right| ^{2}+\beta _{n}^{2}\left|
u_{n}^{j_{k}}(x)\right| ^{2}\right) dx\longrightarrow 0,  \label{norm}
\end{equation}
and as in \cite{Far12} it follows that, 
\begin{equation}
\beta _{n}u_{n}^{j_{k}}(a_{s})\longrightarrow 0,\;\;\partial
_{x}u_{n}^{j_{k}}(a_{s})\longrightarrow 0,\;\text{and }Re\left( \mathbf{i}%
\beta _{n}f_{n}^{j_{k}}(a_{s})\overline{u_{n}^{j_{k}}}(a_{s})\right)
\longrightarrow 0  \label{s11}
\end{equation}
where $a_{s}$ is the end of $e_{j_{k}}$ different from $a_{k}.$

\noindent By iteration we conclude that for every $j\in I_{S}-\{1\},$ the
properties (\ref{norm}) and (\ref{s11}) hold.

If there is no beam in the tree, then the fifth condition in (\ref{s2''})
with (\ref{s11}) imply that $\partial _{x}u_{n}^{1}(a_{2})\longrightarrow 0.$
Then as for $j$ in $\mathcal{V}_{ext}^{S},$ with using (\ref{s11}) again and
the continuity condition of $u$ at internal nodes, we obtain 
\begin{equation*}
-\frac{1}{2}\int_{0}^{\ell _{1}}\left( \left| \partial
_{x}u_{n}^{1}(x)\right| ^{2}+\beta _{n}^{2}\left| u_{n}^{1}(x)\right|
^{2}\right) dx\longrightarrow 0.
\end{equation*}
Then, we conclude that $\left\| y_{n}\right\| \rightarrow 0$ which
contradicts the fact that $\left\| y_{n}\right\| =1$ and the proof is then
complete.

Now suppose that there is at least one beam. Without loss of generality, we
suppose that there is no string in $\mathcal{G}.$

Let $j$ in $\{1,...,N\}$ and $q$ a function in $C^{2}([0,\ell _{j}],\mathbb{C%
})$ such that $\partial _{x}^{2}q=0.$ We want to calculate the real part of
the inner product of (\ref{2.13'}) by $q\partial _{x}u_{n}^{j}.$

Straight-forward calculations give 
\begin{eqnarray*}
&&Re\left( \left\langle -\beta _{n}^{2}u_{n}^{j},q\partial
_{x}u_{n}^{j}\right\rangle \right) +Re\left( \left\langle \partial
_{x}^{4}u_{n}^{j},q\partial _{x}u_{n}^{j}\right\rangle \right) = \\
&&-\frac{1}{2}\beta _{n}^{2}\left. \left| u_{n}^{j}(x)\right|
^{2}q(x)\right| _{0}^{\ell _{j}}+\frac{1}{2}\int_{0}^{\ell _{j}}\beta
_{n}^{2}\left| u_{n}^{j}\right| ^{2}\partial _{x}qdx+Re\left( \left.
\partial _{x}^{3}u_{n}^{j}(x)q(x)\partial _{x}\overline{u_{n}^{j}}(x)\right|
_{0}^{\ell _{j}}\right) \\
&&-\frac{1}{2}\left. \left| \partial _{x}^{2}u_{n}^{j}(x)\right|
^{2}q(x)\right| _{0}^{\ell _{j}}+\frac{3}{2}\int_{0}^{\ell _{j}}\left|
\partial _{x}^{2}u_{n}^{j}\right| ^{2}\partial _{x}qdx-Re\left( \left.
\partial _{x}^{2}u_{n}^{j}(x)\partial _{x}\overline{u_{n}^{j}}\partial
_{x}q(x)\right| _{0}^{\ell _{j}}\right) ,
\end{eqnarray*}
and
$$
Re\left( \left\langle g_{n}^{j}+\mathbf{i}\beta _{n}f_{n}^{j},q\partial
_{x}u_{n}^{j}\right\rangle \right) 
= Re\left( \int_{0}^{\ell _{j}}g_{n}^{j}\partial _{x}\overline{u_{n}^{j}}
qdx\right) 
$$
$$
-Re\left( \mathbf{i}\beta _{n}\int_{0}^{\ell _{j}}\partial
_{x}(qf_{n}^{j})\overline{u_{n}^{j}}dx\right) +Re\left. \mathbf{i}\beta
_{n}f_{n}^{j}(x)q(x)\overline{u_{n}^{j}}(x)\right| _{0}^{\ell _{j}}.
$$

Since $g_{n}^{j},$ $f_{n}^{j}$ and $\partial _{x}(qf_{n}^{j})$ converge to $%
0 $ and $\mathbf{i}\beta _{n}u_{n}^{j}$ and $\partial _{x}u_{n}^{j}$ are
bounded, the first and the second terms of the right member of the above
equality converge to $0.$ It follows 
\begin{gather}
-\frac{1}{2}\beta _{n}^{2}\left. \left| u_{n}^{j}(x)\right| ^{2}q(x)\right|
_{0}^{\ell _{j}}+Re\left( \left. \partial _{x}^{3}u_{n}^{j}(x)q(x)\partial
_{x}\overline{u_{n}^{j}}(x)\right| _{0}^{\ell _{j}}\right) -\frac{1}{2}
\left. \left| \partial _{x}^{2}u_{n}^{j}(x)\right| ^{2}q(x)\right|
_{0}^{\ell _{j}}  \notag \\
-Re\left( \left. \partial _{x}^{2}u_{n}^{j}(x)\partial _{x}\overline{
u_{n}^{j}}\partial _{x}q(x)\right| _{0}^{\ell _{j}}\right) -Re\left. \left( 
\mathbf{i}\beta _{n}f_{n}^{j}(x)q(x)\overline{u_{n}^{j}}(x)\right) \right|
_{0}^{\ell _{j}}  \notag \\
+\frac{1}{2}\int_{0}^{\ell _{j}}\beta _{n}^{2}\left| u_{n}^{j}\right|
^{2}\partial _{x}qdx + 
\frac{3}{2}\int_{0}^{\ell _{j}}\left| \partial
_{x}^{2}u_{n}^{j}\right| ^{2}\partial _{x}qdx\longrightarrow 0.
\label{2.13'''}
\end{gather}

In particular if $a_{k}$ is in $\mathcal{V}_{ext}^{B}$ then with $j=j_{k}$
and $q(x)=x$ or $q(x)=\ell _{j_{k}}-x$ (\ref{2.13'''}) becomes 
$$
-\frac{\ell _{j_{k}}}{2}\left| \partial _{x}^{2}u_{n}^{j_{k}}(a_{k})\right|
^{2}+Re\left( d_{kj_{k}}\partial
_{x}^{2}u_{n}^{j_{k}}(a_{s})\partial _{x}\overline{u_{n}^{j_{k}}}
(a_{s})\right) +\frac{1}{2}\int_{0}^{\ell _{j_{k}}}\beta _{n}^{2}\left|
u_{n}^{j_{k}}\right| ^{2}dx +
$$
\begin{equation}
\frac{3}{2}\int_{0}^{\ell _{j_{k}}}\left|
\partial _{x}^{2}u_{n}^{j_{k}}\right| ^{2}dx\rightarrow 0  \label{2.13''2}
\end{equation}
where $a_{s}$ is the end of $e_{j_{k}}$ different from $a_{k}.$

Multiplying (\ref{2.13'}) by $\frac{1}{\beta _{n}^{1/2}}e^{-\beta
_{n}^{1/2}(\ell _{j_{k}}-x)}$ or by $\frac{1}{\beta _{n}^{1/2}}e^{-\beta
_{n}^{1/2}x},$ then, as in \cite{Far13}, after noting that $\partial
_{x}^{3}u_{n}^{j_{k}}(a_{k})$ and $\beta _{n}u_{n}^{j_{k}}(a_{k})$ tend to $0,$ we obtain 
\begin{equation}
\partial _{x}^{2}u_{n}^{j_{k}}(a_{k})\longrightarrow 0.  \label{exp1}
\end{equation}
Hence, (\ref{2.13''2}) can be rewritten as 
\begin{equation}
Re\left( d_{kj_{k}}\partial
_{x}^{2}u_{n}^{j_{k}}(a_{s})\partial _{x}\overline{u_{n}^{j_{k}}}%
(a_{s})\right) +\frac{1}{2}\int_{0}^{\ell _{j_{k}}}\beta _{n}^{2}\left|
u_{n}^{j_{k}}\right| ^{2}dx+\frac{3}{2}\int_{0}^{\ell _{j_{k}}}\left|
\partial _{x}^{2}u_{n}^{j_{k}}\right| ^{2}dx\longrightarrow 0.
\label{2.13''}
\end{equation}

Now let $q\equiv 1$ (\ref{2.13'''}) can be rewritten, for $j_{k},$ as 
$$
-\frac{1}{2}\beta _{n}^{2}\left| u_{n}^{j_{k}}(a_{s})\right| ^{2}+Re\left(
\partial _{x}^{3}u_{n}^{j_{k}}(a_{s})\partial _{x}\overline{u_{n}^{j_{k}}}
(a_{s})\right) -\frac{1}{2}\left| \partial
_{x}^{2}u_{n}^{j_{k}}(a_{s})\right| ^{2} - 
$$
$$
Re\left( \mathbf{i}\beta
_{n}f_{n}^{j_{k}}(a_{s})\overline{u_{n}^{j_{k}}}(a_{s})\right) \rightarrow 0.
$$

For $j\ $in $I(a_{s}),$ multiplying (\ref{2.13'}) by $\frac{1}{\beta
_{n}^{1/2}}e^{-\beta _{n}^{1/2}x}$ or by $\frac{1}{\beta _{n}^{1/2}}%
e^{-\beta _{n}^{1/2}(\ell _{j}-x)}$, we get, 
\begin{equation*}
\frac{1}{\beta _{n}^{1/2}}d_{sj}\partial
_{x}^{3}u_{n}^{j}(a_{s})+\varepsilon \partial _{x}^{2}u_{n}^{j}(a_{s})+\beta
_{n}^{1/2}d_{sj}\partial _{x}u_{n}^{j}(a_{s})+\varepsilon \beta
_{n}u_{n}^{j}(a_{s})\longrightarrow 0.
\end{equation*}
with $\varepsilon \in \{-1,1\}$.
Summing over $j\in I(a_{s})$, by taking into account the continuity
condition of $u_{n}$ and $\partial _{x}^{2}u_{n},$ the damping
conditions, four in (\ref{s1'}) and fifth in (\ref{s2''}), and boundary condition at $a_{1}$ if $j=1$, we deduce 
\begin{equation*}
\partial _{x}^{2}u_{n}^{j}(a_{s})+\beta _{n}u_{n}^{j}(a_{s})\longrightarrow
0,
\end{equation*}
which leads to 
\begin{equation*}
\frac{1}{\beta _{n}^{1/2}}\partial _{x}^{3}u_{n}^{j}(a_{s})+\beta
_{n}^{1/2}\partial _{x}u_{n}^{j}(a_{s})=\alpha _{n}^{j}\longrightarrow 0.
\end{equation*}
Hence, for any positive real number $a$ we have 
\begin{equation*}
Re\left( \partial _{x}^{3}u_{n}^{j}(a_{s})\partial _{x}\overline{u_{n}^{j}}%
(a_{s})\right) 
\begin{tabular}[t]{l}
$=Re\left( \frac{\partial _{x}^{3}u_{n}^{j}(a_{s})}{\beta _{n}^{1/2}}\beta
_{n}^{1/2}\partial _{x}\overline{u_{n}^{j}}(a_{s})\right) $ \\ 
$=Re\left( (\alpha _{n}^{j}-\beta _{n}^{1/2}\partial
_{x}u_{n}^{j}(a_{s}))\beta _{n}^{1/2}\partial _{x}\overline{u_{n}^{j}}%
(a_{s})\right) $ \\ 
$\leq -\beta _{n}\left| \partial _{x}u_{n}^{j}(a_{s})\right| ^{2}+\frac{a}{2}%
\beta _{n}\left| \partial _{x}u_{n}^{j}(a_{s})\right| ^{2}+\frac{1}{2a}%
\left| \alpha _{n}^{j}\right| ^{2}.$
\end{tabular}
\end{equation*}
Moreover, for any real positive number $b$ we have 
\begin{equation*}
-Re\left( \mathbf{i}\beta _{n}f_{n}^{j}(a_{s})\overline{u_{n}^{j}}%
(a_{s})\right) \leq \frac{b}{2}\beta _{n}^{2}\left| u_{n}^{j}(a_{s})\right|
^{2}+\frac{1}{2b}\left| f_{n}^{j}(a_{s})\right| ^{2}.
\end{equation*}
Thus, we obtain the following framing for $j_{k},$ 
\begin{gather*}
-\frac{1}{2}\beta _{n}^{2}\left| u_{n}^{j_{k}}(a_{s})\right| ^{2}+Re\left(
\partial _{x}^{3}u_{n}^{j_{k}}(a_{s})\partial _{x}\overline{u_{n}^{j_{k}}}
(a_{s})\right) -\frac{1}{2}\left| \partial
_{x}^{2}u_{n}^{j_{k}}(a_{s})\right| ^{2} \\
-Re\left( \mathbf{i}\beta _{n}f_{n}^{j_{k}}(a_{s})\overline{u_{n}^{j_{k}}}
(a_{s})\right) -\frac{1}{2b}\left| f_{n}^{j_{k}}(a_{s})\right| ^{2} \\
\leq (-\frac{1}{2}+\frac{b}{2})\beta _{n}^{2}\left|
u_{n}^{j_{k}}(a_{s})\right| ^{2}-\frac{1}{2}\left| \partial
_{x}^{2}u_{n}^{j_{k}}(a_{s})\right| ^{2}+(-1+\frac{a}{2})\beta _{n}\left|
\partial _{x}u_{n}^{j_{k}}(a_{s})\right| ^{2}\leq 0
\end{gather*}
with $a=b=\frac{1}{2}.$ Which implies that 
\begin{equation*}
\beta _{n}^{2}\left| u_{n}^{j_{k}}(a_{s})\right| ^{2},\;\left| \partial
_{x}^{2}u_{n}^{j_{k}}(a_{s})\right| ^{2},\text{ }\beta _{n}\left| \partial
_{x}u_{n}^{j_{k}}(a_{s})\right| ^{2},\text{ }\frac{1}{\beta _{n}}\left|
\partial _{x}^{3}u_{n}^{j_{k}}(a_{s})\right| ^{2}\longrightarrow 0
\end{equation*}
and then all the expressions 
\begin{equation*}
Re\left( \partial _{x}^{3}u_{n}^{j_{k}}(a_{s})\partial _{x}\overline{
u_{n}^{j_{k}}}(a_{s})\right) ,\;Re\left( \partial
_{x}^{2}u_{n}^{j_{k}}(a_{s})\partial _{x}\overline{u_{n}^{j_{k}}}%
(a_{s})\right) \text{ and }Re\left( \mathbf{i}\beta _{n}f_{n}^{j_{k}}(a_{s})%
\overline{u_{n}^{j_{k}}}(a_{s})\right) 
\end{equation*}
tend to $0$ as $n$ goes to infinity. Then (\ref{2.13''}) leads to 
\begin{equation*}
\frac{1}{2}\int_{0}^{\ell _{j_{k}}}\beta _{n}^{2}\left| u_{n}^{j_{k}}\right|
^{2}dx+\frac{3}{2}\int_{0}^{\ell _{j_{k}}}\left| \partial
_{x}^{2}u_{n}^{j_{k}}\right| ^{2}dx\longrightarrow 0.
\end{equation*}

We iterate such procedure to obtain that for $j\neq 1,$
\begin{equation*}
\frac{1}{2}\int_{0}^{\ell _{j}}\beta _{n}^{2}\left| u_{n}^{j}\right| ^{2}dx+
\frac{3}{2}\int_{0}^{\ell _{j}}\left| \partial _{x}^{2}u_{n}^{j}\right|
^{2}dx\longrightarrow 0
\end{equation*}
and for $j\in I(a_{2})-\{1\},$ 
\begin{equation*}
\beta _{n}^{2}\left| u_{n}^{j}(a_{2})\right| ^{2},\;\left| \partial
_{x}^{2}u_{n}^{j}(a_{2})\right| ^{2},\text{ }Re\left( \partial
_{x}^{3}u_{n}^{j}(a_{2})\partial _{x}\overline{u_{n}^{j}}(a_{2})\right)
,\;Re\left( \partial _{x}^{2}u_{n}^{j}(a_{2})\partial _{x}\overline{u_{n}^{j}%
}(a_{2})\right) \rightarrow 0.
\end{equation*}

Finally let $j=1.$ Using continuity conditions of $u_{n}$ and $\partial
_{x}^{2}u_{n},$ and damping conditions, four in (\ref{s1'}) and fifth in (
\ref{s2''}), (\ref{2.13'''}) leads to 
\begin{equation*}
\frac{1}{2}\int_{0}^{\ell _{1}}\beta _{n}^{2}\left| u_{n}^{1}\right| ^{2}dx+%
\frac{3}{2}\int_{0}^{\ell _{1}}\left| \partial _{x}^{2}u_{n}^{1}\right|
^{2}dx\longrightarrow 0.
\end{equation*}
In conclusion $\left\| y_{n}\right\| $ converge to $0,$ which contradicts the
hypothesis that $\left\| y_{n}\right\| =1.$
\end{proof}

\subsection{Polynomial stability}

In this section we suppose that there is at least a beam following a string 
(Figure \ref{fig2}). We will prove that the solution of the whole system $(%
\mathcal{S})$ is polynomially stable and not exponentially stable.

\begin{theorem}
\label{th2}If at least one beam follows a string from the root to a leaf (which is equivelent to say that the beam is located between two strings or a string and a leaf ) then, the $\mathcal{C}_{0}$%
-semigroup $(T(t)_{t\geq 0}$ is polynomially stable. More precisely: \newline
if all the sets of beams following strings are singletons then there is $C>0$
such that 
\begin{equation*}
\left\| e^{t\mathcal{A}}y_{0}\right\| \leq \frac{C}{t}\left\| y_{0}\right\|
_{\mathcal{D}(\mathcal{A})}
\end{equation*}
for every $y_{0}\in \mathcal{D}(\mathcal{A}).$\newline
If there is a string followed by at least two beams then there is $C>0$ such
that 
\begin{equation*}
\left\| e^{t\mathcal{A}}y_{0}\right\| \leq \frac{C}{t^{2/3}}\left\|
y_{0}\right\| _{\mathcal{D}(\mathcal{A})}
\end{equation*}
for every $y_{0}\in \mathcal{D}(\mathcal{A}).$
\end{theorem}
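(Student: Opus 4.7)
The plan is to apply the Borichev--Tomilov characterization stated at the beginning of Section~\ref{sec3}. Since condition (\ref{2.1}) has already been established by the asymptotic-stability theorem in the previous subsection, what remains is the resolvent estimate
\[
\limsup_{|\beta|\to\infty}\frac{1}{\beta^{\alpha}}\,\bigl\|(\mathbf{i}\beta-\mathcal{A})^{-1}\bigr\|<\infty
\]
with $\alpha=1$ when every block of beams following a string consists of a single beam, and $\alpha=3/2$ otherwise. I would argue by contradiction, following closely the template of Theorem~\ref{th1}: assume there exist $\beta_n\to+\infty$ and $y_n=(u_n,v_n)\in\mathcal{D}(\mathcal{A})$ with $\|y_n\|_{\mathcal{H}}=1$ satisfying $\beta_n^{\alpha}\|(\mathbf{i}\beta_n-\mathcal{A})y_n\|_{\mathcal{H}}\to 0$, and derive the contradiction $\|y_n\|_{\mathcal{H}}\to 0$.

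The dissipation identity gives $v_n^{j_k}(a_k)=o(\beta_n^{-\alpha/2})$ at every controlled leaf, which through the feedback boundary conditions translates into quantitative versions of the pointwise relations used in Theorem~\ref{th1}: $\beta_n u_n^{j_k}(a_k)$ and $\partial_x^3 u_n^{j_k}(a_k)$ tend to zero at beam leaves, and $\partial_x u_n^{j_k}(a_k)$ tends to zero at string leaves, now with explicit rates in $\beta_n$. From there I would reuse the leaf-to-root propagation scheme of the exponential proof: the multiplier identity (\ref{pr}) with $q(x)=x$ or $\ell_j-x$ on string edges, the identity (\ref{2.13'''}) with the same choices of $q$ on beam edges, and the boundary-layer multipliers $\beta_n^{-1/2}e^{-\beta_n^{1/2}x}$ and $\beta_n^{-1/2}e^{-\beta_n^{1/2}(\ell_j-x)}$ applied to (\ref{2.13'}) in order to control $\partial_x^2 u_n^j$ and $\partial_x^3 u_n^j$ at the interior endpoints of beams. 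Junctions joining edges of the same type (string--string or beam--beam) pass through without any loss of powers of $\beta_n$, exactly as in Theorem~\ref{th1}.

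The main obstacle---and the sole source of the polynomial rate---is the analysis at an interior node $a_s$ where one or more beams sit on the leaf-side of a string. After the previous step has handled the far ends of those beams, the boundary-layer identity applied to each beam adjacent to $a_s$, summed over $j\in I_B(a_s)$ and combined with the continuity of $u_n$ and $\partial_x^2 u_n$ and the transmission condition in (\ref{s2''}), produces a small linear relation among the junction traces $\beta_n u_n(a_s)$, $\partial_x^2 u_n(a_s)$, $\beta_n^{1/2}\partial_x u_n^j(a_s)$ and $\beta_n^{-1/2}\partial_x^3 u_n^j(a_s)$. When a single beam emerges from the string, this system is nondegenerate and produces a decay for $\partial_x u_n(a_s)$ compatible with the string-side multiplier (\ref{pr}) on the adjacent string edge, which then forces $\int_0^{\ell}(|\partial_x u_n|^2+\beta_n^2|u_n|^2)\,dx\to 0$; a book-keeping of the powers of $\beta_n$ shows that this closes the argument exactly at $\alpha=1$. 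When two or more beams meet the same string, the transmission identity $\sum_{j\in I_B(a_s)}d_{sj}\partial_x u_n^j(a_s)=0$ lowers the rank and only a weaker decay at the junction can be extracted; balancing the surviving powers of $\beta_n$ shows that the smallest $\alpha$ for which the string multiplier still closes is $\alpha=3/2$.

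Once the junction estimate is in hand with the correct $\alpha$, the string edge on the root-side of the junction is treated by the multiplier (\ref{pr}) as the string leaves were in the exponential proof, and the estimates propagate toward the root using the continuity and transmission conditions (\ref{s1'})--(\ref{s2''}) at each interior node. The Dirichlet condition at $a_1$---together with the Navier condition when $e_1$ is a beam---then closes the induction and yields $\|y_n\|_{\mathcal{H}}\to 0$, contradicting $\|y_n\|_{\mathcal{H}}=1$. The anticipated main technical difficulty is thus entirely localized at string-to-beam branching nodes, where the rank of the junction system dictates the dichotomy between the $1/t$ and $1/t^{2/3}$ rates.
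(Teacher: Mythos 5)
Your overall architecture matches the paper's: Borichev--Tomilov, contradiction with $\beta_n^{\alpha}\|(\mathbf{i}\beta_n-\mathcal{A})y_n\|\to 0$, the dissipation estimate at the leaves, and leaf-to-root propagation via the multipliers $q\partial_x u_n^j$ and the boundary-layer weights $e^{-\beta_n^{1/2}x}$. You also correctly localize the difficulty at the string-to-beam branching nodes, and your treatment of the single-beam case is essentially the paper's (though the real reason that case is ``nondegenerate'' is more concrete than a rank count: with only one beam at $a_2$, the transmission condition $\sum_{j\in I_B(a_2)}d_{2j}\partial_x u^j(a_2)=0$ forces $\partial_x u_n^j(a_2)=0$ identically, which kills the cross term $Re\left(\partial_x^3 u_n^j(a_2)\partial_x\overline{u_n^j}(a_2)\right)$ in the $q\equiv 1$ multiplier identity and makes it sign-definite; that is what yields $\beta_n^3|u_n^j(a_2)|^2\to 0$ and closes the case $\alpha=1$).

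The genuine gap is in the multi-beam case, which is the entire content of the $t^{-2/3}$ rate. With two or more beams at the junction only the signed sum of the $\partial_x u_n^j(a_2)$ vanishes, so the cross terms survive, the multiplier identity is no longer sign-definite, and the ``small linear relation among the junction traces'' you invoke has more unknowns than equations: it cannot by itself produce decay of the individual traces $\partial_x^3 u_n^j(a_2)$, and ``balancing the surviving powers of $\beta_n$'' is an assertion, not a derivation of the exponent $3/2$. The paper closes this gap with a different ingredient that your proposal does not identify or replace: the Gagliardo--Nirenberg interpolation inequalities (Lemma \ref{5}), applied repeatedly on each beam separately, combined with the bound $\|\partial_x^4 u_n^j\|/\beta_n^{1/4}\to 0$ coming from (\ref{2.1312}) and the already-established decay $\beta_n^{1+\alpha/2}\|u_n^j\|\to 0$. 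This interpolation cascade controls the trace $\partial_x^3 u_n^j(a_2)$ on each beam individually, without any junction system, and it is precisely the loss in these interpolations that forces $\alpha=3/2$ rather than $1$. Without this step (or an equivalent trace estimate), your argument does not close at the branching node and the exponent $2/3$ in the decay rate is not obtained.
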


\begin{proof}
In the sequel $\alpha $ is equal to $1$ or $3/2.$

It suffices to prove that (\ref{2.2'}) holds. Suppose the conclusion is
false. Then there exists a sequence $(\beta _{n})$ of real numbers, without
loss of generality, with $\beta _{n}\longrightarrow +\infty ,$ and a
sequence of vectors $(y_{n})=(u_{n},v_{n})$ in $\mathcal{D}(\mathcal{A})$
with $\left\| y_{n}\right\| _{\mathcal{H}}=1$, such that 
\begin{equation*}
\left\| \beta _{n}^{\alpha }(\mathbf{i}\beta _{n}I-\mathcal{A})y_{n}\right\|
_{\mathcal{H}}\longrightarrow 0
\end{equation*}
which is equivalent to 
\begin{eqnarray}
\beta _{n}^{\alpha }(\mathbf{i}\beta _{n}u_{n}^{j}-v_{n}^{j})
&=&f_{n}^{j}\longrightarrow 0,\;\;\;\text{in}\;H^{1}(0,\ell _{j}),\;\;j\ 
\text{in }I_{S},  \label{8.61} \\
\beta _{n}^{\alpha }(\mathbf{i}\beta _{n}u_{n}^{j}-v_{n}^{j})
&=&f_{n}^{j}\longrightarrow 0,\;\;\;\text{in}\;H^{2}(0,\ell _{j}),\;\;j\ 
\text{in }I_{B}, \\
\beta _{n}^{\alpha }(\mathbf{i}\beta _{n}v_{n}^{j}-\partial
_{x}^{2}u_{n}^{j}) &=&g_{n}^{j}\longrightarrow 0,\;\;\;\text{in}%
\;L^{2}(0,\ell _{j}),\;\;j\ \text{in }I_{S}, \\
\beta _{n}^{\alpha }(\mathbf{i}\beta _{n}v_{n}^{j}+\partial
_{x}^{4}u_{n}^{j}) &=&g_{n}^{j}\longrightarrow 0,\;\;\;\text{in}%
\;L^{2}(0,\ell _{j}),\;\;j\ \text{in }I_{B}.  \label{8.71}
\end{eqnarray}
Then 
\begin{eqnarray}
-\beta _{n}^{\alpha }(\beta _{n}^{2}u_{n}^{j}+\partial _{x}^{2}u_{n}^{j})
&=&g_{n}^{j}+\mathbf{i}\beta _{n}f_{n}^{j},\;\;j\ \text{in }I_{S},
\label{2.1311} \\
\beta _{n}^{\alpha }(-\beta _{n}^{2}u_{n}^{j}+\partial _{x}^{4}u_{n}^{j})
&=&g_{n}^{j}+\mathbf{i}\beta _{n}f_{n}^{j},\;\;j\ \text{in }I_{B},
\label{2.1312}
\end{eqnarray}
and 
\begin{equation*}
\left\| v_{n}^{j}\right\| ^{2}-\beta _{n}^{2}\left\| u_{n}^{j}\right\|
^{2}\longrightarrow 0,\;\;j=1,...,N.
\end{equation*}

Since $Re(\left\langle \beta _{n}^{\alpha }(\mathbf{i}\beta _{n}-\mathcal{A}%
)y_{n},y_{n}\right\rangle _{\mathcal{H}})=\sum\limits_{a_{k}\in \mathcal{V}%
_{ext}^{\ast }}\beta _{n}^{\alpha }\left| v_{n}^{j_{k}}(a_{k})\right| ^{2},$
we obtain 
\begin{equation*}
\beta _{n}^{\frac{\alpha }{2}}\left| v_{n}^{j_{k}}(a_{k})\right|
\longrightarrow 0,\;\text{for }a_{k}\in \mathcal{V}_{ext}^{\ast }.
\end{equation*}
Then $\forall a_{k}\in \mathcal{V}_{ext}^{B},$ $\beta _{n}^{\frac{\alpha }{2}%
}\partial _{x}^{3}u_{n}^{j_{k}}(a_{k})\longrightarrow 0,$ $\forall a_{k}\in 
\mathcal{V}_{ext}^{S}$ $\beta _{n}^{\frac{\alpha }{2}}\partial
_{x}u_{n}^{j_{k}}(a_{k})\longrightarrow 0,$ $\forall a_{k}\in \mathcal{V}%
_{ext}^{\ast }$, $\beta _{n}^{1+\frac{\alpha }{2}}u_{n}^{j_{k}}(a_{k})%
\longrightarrow 0,$ and recall that $\partial _{x}u_{n}^{j_{k}}(a_{k})=0$
for $a_{k}\in \mathcal{V}_{ext}^{B}$.

As in the proof of the previous theorem, if $e_{j}$ is a string followed by
no beam then 
\begin{equation}
-\frac{1}{2}\beta _{n}^{\alpha }\int_{0}^{\ell _{j}}\left( \left| \partial
_{x}u_{n}^{j}(x)\right| ^{2}+\beta _{n}^{2}\left| u_{n}^{j}(x)\right|
^{2}\right) dx\longrightarrow 0,  \label{norm1}
\end{equation}
and if $a_{k}$ is an end of such edge then 
\begin{equation}
\beta _{n}^{1+\frac{\alpha }{2}}u_{n}^{j}(a_{k})\longrightarrow 0\text{ and }%
\beta _{n}^{\frac{\alpha }{2}}\partial _{x}u_{n}^{j}(a_{k})\longrightarrow 0.
\label{s111}
\end{equation}

Now we can suppose without loss of generality that all edges are beams,
except that related to the root which is a string.

Again, as in the previous proof, we have that for every beam $e_{j}$ which
not adjacent to the string, 
\begin{equation}
\beta _{n}^{1+\frac{\alpha }{2}}\left\| u_{n}^{j}\right\| ,\;\beta _{n}^{%
\frac{\alpha }{2}}\left\| \partial _{x}^{2}u_{n}^{j}\right\| \longrightarrow
0,  \label{for}
\end{equation}
and if $a_{k}$ is an end of $e_{j}$ then 
\begin{equation}
\beta _{n}^{1+\frac{\alpha }{2}}\left| u_{n}^{j}(a_{k})\right|
\longrightarrow 0,\;\beta _{n}^{\frac{\alpha }{2}}\left| \partial
_{x}^{2}u_{n}^{j}(a_{k})\right| \longrightarrow 0,\text{ }  \label{s1111}
\end{equation}
and all the expressions 
\begin{equation*}
Re\left( \beta _{n}^{\alpha }\partial _{x}^{3}u_{n}^{j}(a_{k})\partial _{x}%
\overline{u_{n}^{j}}(a_{k})\right) ,\;Re\left( \beta _{n}^{\alpha }\partial
_{x}^{2}u_{n}^{j}(a_{k})\partial _{x}\overline{u_{n}^{j}}(a_{k})\right) 
\text{ and }Re\left( \mathbf{i}\beta _{n}^{1+\alpha }f_{n}^{j}(a_{k})%
\overline{u_{n}^{j}}(a_{k})\right)
\end{equation*}
tend to zero as $n$ goes to infinity.

Let $j\in I(a_{2})-\{1\}$ (equivalently, $e_{j}$ is a beam attached to $%
a_{2} $), then the real part of the inner product of (\ref{2.1312}) by $%
q\partial _{x}u_{n}^{j},$ with $q=x$ or $q=\ell _{j}-x$ gives 
$$
Re\left( \beta _{n}^{\alpha }d_{kj}\partial
_{x}^{2}u_{n}^{j}(a_{2})\partial _{x}\overline{u_{n}^{j}}(a_{2})\right) +%
\frac{1}{2}\beta _{n}^{\alpha }\int_{0}^{\ell _{j}}\beta _{n}^{2}\left|
u_{n}^{j}\right| ^{2}dx + 
$$
\begin{equation}
\frac{3}{2}\beta _{n}^{\alpha }\int_{0}^{\ell
_{j}}\left| \partial _{x}^{2}u_{n}^{j}\right| ^{2}dx\longrightarrow 0
\label{2.p}
\end{equation}

Summing over $j$ in $I(a_{2})-\{1\},$ one can deduce easily that 
\begin{equation*}
\sum_{j\in I(a_{2})-\{1\}}\left( \frac{1}{2}\beta _{n}^{\alpha
}\int_{0}^{\ell _{j}}\beta _{n}^{2}\left| u_{n}^{j}\right| ^{2}dx+\frac{3}{2}%
\beta _{n}^{\alpha }\int_{0}^{\ell _{j}}\left| \partial
_{x}^{2}u_{n}^{j}\right| ^{2}dx\right) \longrightarrow 0
\end{equation*}
and in particular, for every beam $e_{j}$%
\begin{equation*}
\beta _{n}^{\alpha }\left( \int_{0}^{\ell _{j}}\beta _{n}^{2}\left|
u_{n}^{j}\right| ^{2}dx+\int_{0}^{\ell _{j}}\left| \partial
_{x}^{2}u_{n}^{j}\right| ^{2}dx\right) \longrightarrow 0
\end{equation*}

Now we want prove that, for $j$ in $I(a_{2})-\{1\},$ $\beta
_{n}u_{n}^{j}(a_{2})$ and $\partial _{x}^{3}u_{n}^{j}(a_{2})$ converge to $0$%
.

\textit{First case:} In this case there is one and only one beam $e_{j}$
attached to $e_{1}$ and $\alpha =1.$

The real part of the inner product of (\ref{2.1312}) by $q\partial
_{x}u_{n}^{j},$ with $q=1$ gives 
\begin{equation}
-\frac{1}{2}\beta _{n}^{3}\left| u_{n}^{j}(a_{2})\right| ^{2}-\frac{1}{2}%
\beta _{n}\left| \partial _{x}^{2}u_{n}^{j}(a_{2})\right| ^{2}\rightarrow 0,
\label{tt}
\end{equation}
then $\beta _{n}^{3}\left| u_{n}^{j}(a_{2})\right| ^{2}$ and $\beta
_{n}\left| \partial _{x}^{2}u_{n}^{j}(a_{2})\right| $ converge to zero as $n$
goes to infinite.

Multiplying (\ref{2.1312}) by $\frac{1}{\beta _{n}}e^{-\beta _{n}^{1/2}x}$
or by $\frac{1}{\beta _{n}}e^{-\beta _{n}^{1/2}(\ell _{j}-x)}$, we obtain,
using the results below 
\begin{equation*}
\partial _{x}^{3}u_{n}^{j}(a_{2})\longrightarrow 0.
\end{equation*}
We deduce, using the fifth condition in (\ref{s2''}), that $\partial
_{x}u_{n}^{1}(a_{1})\longrightarrow 0.$

\textit{Second case:} In this case there is a string followed by at least
two beams and $\alpha =3/2.$ 
First, (\ref{2.1312}) implies 
\begin{equation}
\frac{\left\| \partial _{x}^{4}u_{n}^{j}\right\| }{\beta _{n}^{1/4}}%
\longrightarrow 0.  \label{st}
\end{equation}
Second, we need the following lemma (due to Gagliardo and Nirenberg \cite{Liu99}):
\end{proof}

\begin{lemma}
\label{5}

\begin{enumerate}
\item  There are two positive constants $C_{1}$ and $C_{2}$ such that for
any $w$ in $H^{1}(0,\ell _{j}),$%
\begin{equation}
\left\| w\right\| _{\infty }\leq C_{1}\left\| \partial _{x}w\right\|
^{1/2}\left\| w\right\| ^{1/2}+C_{2}\left\| w\right\| .  \label{g1}
\end{equation}

\item  There are two positive constants $C_{3}$ and $C_{4}$ such that for
any $w $ in $H^{2}(0,\ell _{j}),$%
\begin{equation}
\left\| \partial _{x}w\right\| \leq C_{3}\left\| \partial _{x}^{2}w\right\|
^{1/2}\left\| w\right\| ^{1/2}+C_{4}\left\| w\right\| .  \label{g2}
\end{equation}
\end{enumerate}
\end{lemma}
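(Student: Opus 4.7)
The two estimates are classical one-dimensional interpolation inequalities, and I would prove both directly from the fundamental theorem of calculus combined with Cauchy--Schwarz and Young's inequality, which keeps all constants explicit and avoids invoking a general Gagliardo--Nirenberg theorem. For (g1), fix $x_0\in[0,\ell_j]$ at which $|w(x_0)|=\|w\|_\infty$. The identity $|w(x_0)|^2 = |w(y)|^2 + 2\,\mathrm{Re}\int_y^{x_0}\overline{w(s)}\,\partial_s w(s)\,ds$ together with Cauchy--Schwarz yields $|w(x_0)|^2 \le |w(y)|^2 + 2\|w\|\,\|\partial_x w\|$ for every $y\in[0,\ell_j]$. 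Integrating in $y$ over the interval gives $\ell_j\|w\|_\infty^2 \le \|w\|^2 + 2\ell_j\|w\|\,\|\partial_x w\|$, and then $\sqrt{a+b}\le\sqrt{a}+\sqrt{b}$ produces (g1) with $C_1=\sqrt{2}$ and $C_2=\ell_j^{-1/2}$.

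For (g2), my plan is to start from the integration-by-parts identity $\|\partial_x w\|^2 = [\overline{w}\,\partial_x w]_0^{\ell_j} - \int_0^{\ell_j}\overline{w}\,\partial_x^2 w\,dx$, and bound the right-hand side by $2\|w\|_\infty\,\|\partial_x w\|_\infty + \|w\|\,\|\partial_x^2 w\|$. Applying (g1) once to $w$ and once to $\partial_x w$ controls both $L^\infty$ norms in terms of $\|w\|$, $\|\partial_x w\|$, $\|\partial_x^2 w\|$ (plus lower-order remainders). Expanding the resulting product gives a sum of monomials in which every occurrence of $\|\partial_x w\|$ appears to a power strictly less than $2$; a careful use of Young's inequality then absorbs those contributions into the left-hand side, leaving only terms of the form $\|w\|\,\|\partial_x^2 w\|$ and $\|w\|^2$. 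Taking square roots via $\sqrt{a+b}\le\sqrt{a}+\sqrt{b}$ delivers (g2).

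The only delicate point is the absorption step in (g2): the dominant mixed term after substituting (g1) is essentially $\|\partial_x^2 w\|^{1/2}\|\partial_x w\|\|w\|^{1/2}$, and one must choose the weights in the Young splitting $ab\le\tfrac{\epsilon}{2}a^2+\tfrac{1}{2\epsilon}b^2$ so that every coefficient of $\|\partial_x w\|^2$ on the right is strictly below $1$, while the leftover $\|w\|\,\|\partial_x^2 w\|$ and $\|w\|^2$ terms still reassemble into the claimed bound. This is bookkeeping rather than conceptual difficulty, and the constants $C_3,C_4$ can in principle be tracked explicitly as functions of $\ell_j$.
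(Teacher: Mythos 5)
Your argument is correct, but it is worth noting that the paper does not prove this lemma at all: it simply records the two inequalities as classical Gagliardo--Nirenberg estimates and cites the book of Liu and Zheng \cite{Liu99}. What you supply is therefore a self-contained elementary replacement for that citation. Part (1) is clean and complete: the identity $|w(x_0)|^2=|w(y)|^2+2\,\mathrm{Re}\int_y^{x_0}\overline{w}\,\partial_s w\,ds$ (legitimate since $H^1(0,\ell_j)$ embeds in $C([0,\ell_j])$, so the maximizing point $x_0$ exists), Cauchy--Schwarz, integration in $y$, and $\sqrt{a+b}\le\sqrt{a}+\sqrt{b}$ do give (\ref{g1}) with $C_1=\sqrt{2}$, $C_2=\ell_j^{-1/2}$. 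Part (2) also goes through: writing $X=\|\partial_x w\|$, $A=\|w\|$, $B=\|\partial_x^2 w\|$, the integration-by-parts identity plus two applications of (\ref{g1}) (to $w$ and to $\partial_x w$, the latter being in $H^1$ since $w\in H^2$) yields $X^2\le c_1 XA^{1/2}B^{1/2}+c_2 X^{3/2}A^{1/2}+c_3 AB^{1/2}X^{1/2}+c_4 AX+AB$, and each term carrying a power of $X$ strictly below $2$ absorbs by Young's inequality (with exponents $2,2$; $4/3,4$; and a preliminary split $AB^{1/2}X^{1/2}\le\frac12 AB+\frac12 AX$ for the third term) into $\frac12 X^2+C(AB+A^2)$, giving $X\le C_3 B^{1/2}A^{1/2}+C_4 A$. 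What your route buys is explicit, trackable constants depending only on $\ell_j$ and independence from an external reference; what the citation buys is brevity, since these are indeed standard facts. Either is acceptable here.
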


\begin{proof}[The remainder of the proof]
Applying the previous lemma several times, one can deduce easily that 
\begin{equation}
%\beta _{n}u_{n}^{j}(a_{2})\longrightarrow 0\;\text{and }
\partial
_{x}^{3}u_{n}^{j}(a_{1})\longrightarrow 0.  \label{s1111'}
\end{equation}

Return back to the fifth condition in (\ref{s2''}) with using (\ref{s1111'}), we
get $\partial _{x}u_{n}^{1}(a_{1})\longrightarrow 0.$ Now multiplying (\ref
{2.1311}), when $u^{j}=u^{1}$, with $q(x)=\ell _{1}-x\ $or $q(x)=x,$ then as
for (\ref{pr}) and (\ref{norm}), with using (\ref{s1111'}) again and the
continuity condition of $u$ at $a_{2}$, it follows 
\begin{equation*}
-\frac{1}{2}\int_{0}^{\ell _{1}}\left( \left| \partial
_{x}u_{n}^{1}(x)\right| ^{2}+\beta _{n}^{2}\left| u_{n}^{1}(x)\right|
^{2}\right) dx\longrightarrow 0.
\end{equation*}
Then we conclude that $\left\| y_{n}\right\| \rightarrow 0$ which contradicts
the fact that $\left\| y_{n}\right\| =1$ and the proof is then complete.
\end{proof}

Now we consider a reduced system composed of one string $e_{1}$ and one beam 
$e_{2}$ such that $\ell _{1}=\ell _{2}=\pi $ and with control is applied on
the beam. Precisely we consider the system

\begin{equation*}
(\mathcal{S}_{0}):\left\{ 
\begin{tabular}{l}
$u_{tt}^{1}-u_{xx}^{1}=0$ in $(0,\pi )\times (0,\infty ),$ \\ 
$u_{tt}^{2}+u_{xxxx}^{2}=0$ in $(0,\pi )\times (0,\infty ),$ \\ 
\\ 
$u^{1}(0,t)=u^{2}(0,t),\;u_{x}^{2}(0,t)=0,\;u_{xxx}^{2}(0,t)=u_{x}^{1}(0,t),$
\\ 
$u^{1}(\pi ,t)=0,\;u_{xxx}^{2}(\pi ,t)=u_{t}^{2}(\pi ,t),\;u_{x}^{2}(\pi
,t)=0,$ \\ 
\\ 
$u^{j}(x,0)=u_{0}^{j}(x),\;\;u_{t}^{j}(x,0)=u_{1}^{j}(x),\;j=1,2.$%
\end{tabular}
\right.
\end{equation*}
In view of Theorem \ref{th2} the system $(\mathcal{S}_{0})$ is polynomial
stable and we will prove that it is not exponentially stable. Note that if
the control is applied on the string instead of the beam then the system is
the exponential stability (by Theorem \ref{th1}).

\begin{theorem}
The system $(\mathcal{S}_{0})$ is not exponential stable in the energy space 
$\mathcal{H}$.
\end{theorem}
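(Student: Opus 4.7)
The plan is to invoke the Huang-Pr\"uss theorem negatively: I will show that condition (\ref{2.2}) fails for the generator $\mathcal{A}$ of $(\mathcal{S}_0)$, which rules out exponential stability. Concretely, I will construct a sequence $(\beta_n)\subset\mathbb{R}$ with $\beta_n\to+\infty$ and $(y_n)\subset\mathcal{D}(\mathcal{A})$ with $\|y_n\|_{\mathcal{H}}=1$ and $\|(\mathbf{i}\beta_n I-\mathcal{A})y_n\|_{\mathcal{H}}\to0$.

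I would use the two-scale Weyl ansatz
\begin{equation*}
u_n^1(x)=b_n\bigl(\sin(\beta_n x)-\tan(\beta_n\pi)\cos(\beta_n x)\bigr),
\end{equation*}
\begin{equation*}
u_n^2(x)=A_n\cos(\xi_n x)+B_n\sin(\xi_n x)+C_n\cosh(\xi_n x)+D_n\sinh(\xi_n x),
\end{equation*}
with $\xi_n=\sqrt{\beta_n}$, so that $u_n^1(\pi)=0$ automatically and each component satisfies its eigenvalue equation exactly; the only residual of $(\mathbf{i}\beta_n I-\mathcal{A})y_n$ thus comes from the damping condition. I would impose the four exact matching conditions $u_n^1(0)=u_n^2(0)$, $\partial_xu_n^2(0)=0$, $\partial_xu_n^2(\pi)=0$ and $\partial_x^3u_n^2(0)=\partial_xu_n^1(0)$, reducing the five coefficients to an overall scaling, and then set $v_n=\mathbf{i}\beta_n u_n+w_n$ where $w_n\in V$ is a smooth $H^2$-cutoff supported near $x=\pi$ on the beam with $w_n^2(\pi)=d_n:=\partial_x^3u_n^2(\pi)-\mathbf{i}\beta_n u_n^2(\pi)$. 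This guarantees $y_n\in\mathcal{D}(\mathcal{A})$, and the residual reduces to
\begin{equation*}
\|(\mathbf{i}\beta_n I-\mathcal{A})y_n\|_{\mathcal{H}}^2=\|w_n\|_V^2+\beta_n^2\|w_n\|^2\le C|d_n|^2\beta_n^{3/2},
\end{equation*}
after optimizing the cutoff width in $\beta_n^{-1/2}$.

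It then remains to choose $\beta_n\to+\infty$ so that, after normalization, $\|y_n\|_{\mathcal{H}}\ge c>0$ and $|d_n|=o(\beta_n^{-3/4})$. A careful asymptotic analysis of the matching equations, tracking the dominant exponentials $\cosh(\xi_n\pi),\sinh(\xi_n\pi)\sim\tfrac12 e^{\xi_n\pi}$, shows that $d_n$ is governed by a transcendental condition coupling $\beta_n\pi$ modulo $\pi$ with $\xi_n\pi$ modulo $\pi$, of the rough form $\cot(\beta_n\pi)\sim c\sqrt{\beta_n}$ together with $\xi_n\pi\sim(k+\tfrac12)\pi$. The density of simultaneous approximate solutions then produces the required sequence.

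The main obstacle is precisely this two-scale matching: the string wavelength $\sim\beta_n^{-1}$ and the beam wavelength $\sim\beta_n^{-1/2}$ are of different orders, and this mismatch is the same feature that forced the merely polynomial rate in Theorem~\ref{th2}. A cleaner alternative I would probably pursue is spectral: analyze the characteristic determinant of $\mathcal{A}$ and exhibit a branch of eigenvalues $\lambda_n\in\sigma(\mathcal{A})$ with $|\mathrm{Im}\,\lambda_n|\to+\infty$ and $\mathrm{Re}\,\lambda_n\to0$ (already suggested by the $1/t$ rate of Theorem~\ref{th2}, which forces $|\mathrm{Re}\,\lambda_n|=O(|\mathrm{Im}\,\lambda_n|^{-1})$). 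The associated normalized eigenvectors would then form a genuine Weyl sequence for $\mathcal{A}$ with $\beta_n=\mathrm{Im}\,\lambda_n$, making the cutoff correction unnecessary.
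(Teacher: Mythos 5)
Your overall framework is sound: exhibiting a normalized sequence $y_n\in\mathcal{D}(\mathcal{A})$ with $\|(\mathbf{i}\beta_n I-\mathcal{A})y_n\|_{\mathcal{H}}\to 0$ does negate (\ref{2.2}) and hence exponential stability, and this is equivalent to what the paper does (the paper applies the bounded forcing $f_n=(0,0,-\sin \beta_n x,0)$ and shows the exact solution of $(\mathcal{A}-\mathbf{i}\beta_n)y_n=f_n$ blows up, which is the same failure of the uniform resolvent bound read in the other direction). Your bookkeeping for the corrector is also essentially right: with $w_n$ supported in a window of width $\beta_n^{-1/2}$ the residual is of order $|d_n|^2\beta_n^{3/2}$, so everything reduces to producing $\beta_n\to\infty$ for which the damping defect $d_n$ is $o(\beta_n^{-3/4})$ relative to $\|y_n\|_{\mathcal{H}}$.

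That last step is precisely where your proposal stops, and it is the entire content of the theorem. You correctly identify the obstacle --- the string oscillates at frequency $\beta_n$ and the beam at $\sqrt{\beta_n}$, so the matching conditions couple $\beta_n\pi$ and $\sqrt{\beta_n}\pi$ modulo $\pi$ --- but then dispose of it by appealing to ``the density of simultaneous approximate solutions'' of a transcendental condition whose form you only guess (and your guess, $\xi_n\pi\sim(k+\tfrac12)\pi$, does not match the actual resonance: in the paper the relevant regime is $\sin(\sqrt{\beta_n}\pi)$ and $\tan(\beta_n\pi)$ both small, arranged by the explicit choice $\beta_n=n^2+2\sqrt n+\tfrac1n$ with $\sqrt n$ an even integer, so that $\sqrt{\beta_n}=n+\tfrac1{\sqrt n}$ and both phases are controlled simultaneously and quantitatively). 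Without carrying out this asymptotic expansion of the $5\times 5$ matching system --- the paper's computation culminating in $2\beta_n^{3/2}d_1\sim\pi^2\sqrt n$ and then the multiplier identity showing $\beta_n^2\|u_n^1\|^2+\|\partial_x u_n^1\|^2\to\infty$ --- there is no proof. A secondary inaccuracy: in your spectral alternative, the $1/t$ decay rate of Theorem~\ref{th2} gives a \emph{lower} bound of the type $|\mathrm{Re}\,\lambda_n|\gtrsim |\mathrm{Im}\,\lambda_n|^{-1}$ on eigenvalues (they cannot approach the axis faster than that); it does not ``force'' the existence of a branch with $\mathrm{Re}\,\lambda_n\to 0$, which would again have to be established by the same kind of explicit asymptotics of the characteristic determinant.
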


\begin{proof}
We prove that the corresponding semigroup $(T(t))_{t \geq 0}$ is not
exponentially stable.

For $n\in \mathbb{N},$ such that $\sqrt{n}$ is integer and even let $\beta
_{n}=n^{2}+2\sqrt{n}+\frac{1}{n}$ and $f_{n}=(0,0,-\sin \beta _{n}x,0),$
then $\beta _{n}\rightarrow +\infty $ and $f_{n}$ is in $\mathcal{H}$ and is
bounded. Let $y_{n}=(u_{n}^{1},u_{n}^{2},v_{n}^{1},v_{n}^{2})\in \mathcal{D}(
\mathcal{A})$ such that $(\mathcal{A}-i\beta _{n})y_{n}=f_{n}.$ We will
prove that $y_{n}\rightarrow +\infty.$

We have 
\begin{eqnarray}
\beta _{n}^{2}u_{n}^{1}+\partial _{x}^{2}u_{n}^{1} &=&\sin \beta _{n}x,
\label{ll1} \\
-\beta _{n}^{2}u_{n}^{2}+\partial _{x}^{4}u_{n}^{2} &=&0.  \label{ll2}
\end{eqnarray}
then $u_{n}^{1}$ and $u_{n}^{2}$ are of the form 
\begin{eqnarray*}
u_{n}^{1} &=&c_{1}\sin (\beta _{n}x)+(-\frac{x}{2\beta _{n}}+c_{2})\cos
(\beta _{n}x), \\
u_{n}^{2} &=&d_{1}\sin (\sqrt{\beta _{n}}x)+d_{2}\cos (\sqrt{\beta _{n}}%
x)+d_{3}\sinh (\sqrt{\beta _{n}}x)+d_{4}\cosh (\sqrt{\beta _{n}}x).
\end{eqnarray*}
The transmission and boundary conditions are rewritten as follows 
\begin{eqnarray}
d_{2}+d_{4} &=&c_{2},  \label{l1} \\
\sqrt{\beta _{n}}(d_{1}+d_{3}) &=&0,  \label{l2} \\
\beta _{n}^{3/2}(-d_{1}+d_{3}) &=&-\frac{1}{2\beta _{n}}+\beta _{n}c_{1},
\label{l3}
\end{eqnarray}
and 
\begin{eqnarray}
c_{1}\sin (\beta _{n}\pi )+(-\frac{\pi }{2\beta _{n}}+c_{2})\cos (\beta
_{n}\pi ) &=&0,  \label{l4} \\
d_{1}\cos (\sqrt{\beta _{n}}\pi )-d_{2}\sin (\sqrt{\beta _{n}}\pi
)+d_{3}\cosh(\sqrt{\beta _{n}}\pi )+d_{4}\sinh(\sqrt{\beta _{n}}\pi ) &=&0,
\label{l5}
\end{eqnarray}
and 
\begin{eqnarray}
&&\beta _{n}^{3/2}(-d_{1}\cos (\sqrt{\beta _{n}}\pi )+d_{2}\sin (\sqrt{\beta
_{n}}\pi )+d_{3}\cosh (\sqrt{\beta _{n}}\pi )+d_{4}\sinh (\sqrt{\beta _{n}}
\pi ))  \notag \\
&=&\mathbf{i}\beta _{n}(d_{1}\sin (\sqrt{\beta _{n}}\pi )+d_{2}\cos (\sqrt{%
\beta _{n}}\pi )+d_{3}\sinh (\sqrt{\beta _{n}}\pi )+d_{4}\cosh (\sqrt{\beta
_{n}}\pi )).  \label{l6}
\end{eqnarray}

Summing (\ref{l5}) and (\ref{l6}), we obtain 
\begin{eqnarray}
&&2\sqrt{\beta _{n}}(d_{3}\cosh (\sqrt{\beta _{n}}\pi )+d_{4}\sinh (\sqrt{
\beta _{n}}\pi ))  \notag \\
&=&\mathbf{i}(d_{1}\sin (\sqrt{\beta _{n}}\pi )+d_{2}\cos (\sqrt{\beta _{n}}
\pi )+d_{3}\sinh (\sqrt{\beta _{n}}\pi )+d_{4}\cosh (\sqrt{\beta _{n}}\pi )).
\label{99}
\end{eqnarray}
Substituting (\ref{l1}) and (\ref{l2}) into (\ref{l5}) leads to 
\begin{equation}
d_{4}=\frac{\cosh (\sqrt{\beta _{n}}\pi )-\cos (\sqrt{\beta _{n}}\pi )}{
\sinh (\sqrt{\beta _{n}}\pi +\sin (\sqrt{\beta _{n}}\pi )}d_{1}+\frac{\sin (
\sqrt{\beta _{n}}\pi )}{\sinh (\sqrt{\beta _{n}}\pi +\sin (\sqrt{\beta _{n}}
\pi )}c_{2}.  \label{l8}
\end{equation}
Now, by substituting (\ref{l1}-\ref{l3}) and (\ref{l8}) into (\ref{99}) we
get 
\begin{eqnarray}
&&2d_{1}\left( \sqrt{\beta _{n}}h(\sqrt{\beta _{n}}\pi )-2\beta _{n}\tan
(\beta _{n}\pi )\sin (\sqrt{\beta _{n}}\pi )\sinh (\sqrt{\beta _{n}}\pi
)\right.  \notag\\
&&\left. +\mathbf{i}\left( 1-\cosh (\sqrt{\beta _{n}}\pi )\cos (\sqrt{\beta
_{n}}\pi )+\sqrt{\beta _{n}}\tan (\beta _{n}\pi )h(\sqrt{\beta _{n}}\pi
)\right) \right)  \notag \\
&=&\left( 2\sqrt{\beta _{n}}\sin (\sqrt{\beta _{n}}\pi )\sinh (\sqrt{\beta
_{n}}\pi )-\mathbf{i}h(\sqrt{\beta _{n}}\pi )\right) \left( -\frac{1}{2\beta
_{n}^{2}}\tan (\beta _{n}\pi )+\frac{\pi }{2\beta _{n}}\right)  \label{aa}
\end{eqnarray}
with 
\begin{equation*}
h(\sqrt{\beta _{n}}\pi )=\cosh (\sqrt{\beta _{n}}\pi )\sin (\sqrt{\beta _{n}}
\pi )+\sinh (\sqrt{\beta _{n}}\pi )\cos (\sqrt{\beta _{n}}\pi ).
\end{equation*}

We want to prove that $\beta _{n}^{3/2}d_{1}$ is equivalent to $\sqrt{n}\frac{
\pi ^{2}}{2}$ as $n$ goes to infinity.

Since $\beta _{n}=n^{2}+2\sqrt{n}+\frac{1}{n}$ then $\sqrt{\beta _{n}}=n+
\frac{1}{\sqrt{n}}=n(1+o(\frac{1}{\sqrt{n}}))$ and 
\begin{eqnarray*}
\sin (\sqrt{\beta _{n}}\pi ) &=&\sin (\frac{\pi }{\sqrt{n}})=\frac{\pi }{
\sqrt{n}}+o(\frac{1}{\sqrt{n}}), \\
\cos (\sqrt{\beta _{n}}\pi ) &=&\cos (\frac{\pi }{\sqrt{n}})=1+o(\frac{1}{
\sqrt{n}}), \\
\tan (\beta _{n}\pi ) &=&\tan (\frac{\pi }{n})=\frac{\pi }{n}+o(\frac{1}{n
\sqrt{n}}), \\
\cosh (\sqrt{\beta _{n}}\pi ) &=&\cosh ((n+\frac{1}{\sqrt{n}})\pi )=\frac{
e^{n\pi }}{2}(1+\frac{\pi }{\sqrt{n}}+o(\frac{1}{\sqrt{n}})), \\
\sinh (\sqrt{\beta _{n}}\pi ) &=&\frac{e^{n\pi }}{2}(1+\frac{\pi }{\sqrt{n}}
+o(\frac{1}{\sqrt{n}})),
\end{eqnarray*}
then 
\begin{eqnarray*}
h(\sqrt{\beta _{n}}\pi ) &=&\frac{e^{n\pi }}{2}(1+\frac{2\pi }{\sqrt{n}}+o(%
\frac{1}{\sqrt{n}})), \\
\sqrt{\beta _{n}}\sinh (\sqrt{\beta _{n}}\pi )\sin (\sqrt{\beta _{n}}\pi
)\tan (\beta _{n}\pi ) &=&\frac{e^{n\pi }}{2}(\frac{\pi ^{2}}{\sqrt{n}}+o(
\frac{1}{\sqrt{n}})) \\
\sqrt{\beta _{n}}\sinh (\sqrt{\beta _{n}}\pi )\sin (\sqrt{\beta _{n}}\pi )
&=&\pi \sqrt{n}\frac{e^{n\pi }}{2}(1+\frac{\pi }{\sqrt{n}}+o(\frac{1}{\sqrt{n%
}}))
\end{eqnarray*}
Hence, (\ref{aa}) implies
\begin{equation*}
2d_{1}\sqrt{\beta _{n}}\frac{e^{n\pi }}{2}\left( 1+o(1)\right) =\frac{%
e^{n\pi }}{2}\sqrt{n}(2\pi +o(1))(-\frac{1}{2\beta _{n}^{2}}\tan (\beta
_{n}\pi )+\frac{\pi }{2\beta _{n}}).
\end{equation*}
which leads to 
\begin{equation}
2\beta _{n}^{3/2}d_{1}\sim _{+\infty }\pi ^{2}\sqrt{n}.  \label{r1}
\end{equation}

Now suppose that $\partial _{x}u_{n}^{1}$ is bounded. The real part of the inner product of (\ref{ll1}) with $(\pi -x)\partial _{x}u_{n}^{1}$ gives 
\begin{equation*}
-\frac{\pi }{2}\left| -\frac{1}{2\beta _{n}}+\beta _{n}c_{1}\right| ^{2}-
\frac{\pi }{2}\left| \beta _{n}c_{2}\right| ^{2}=-\frac{1}{2}(\left\|
u_{n}^{1}\right\| ^{2}+\left\| \partial _{x}u_{n}^{1}\right\|
^{2})+Re(\int_{0}^{\pi }\sin (\beta _{n}x)(\pi -x)\partial _{x}\overline{u_{n}^{1}}
dx).
\end{equation*}
By taking into account (\ref{l2}-\ref{l3}) and (\ref{r1}), $\beta _{n}^{2}\left\| u_{n}^{1}\right\|
^{2}+\left\| \partial _{x}u_{n}^{1}\right\| ^{2}$ converge to infinity. In
conclusion $y_{n}$ is not bounded.
\end{proof}

\begin{remark}
Let $\varepsilon >0.$ By taking $\beta _{n}=n^{2}+2n^{1-\alpha }+\frac{1}{
n^{2\alpha }}$ with $0<\alpha <\varepsilon $ and such that $n^{1-\alpha }$
is integer and even and $y_{n}$ is such that $f_{n}=(\beta _{n}^{\frac{1}{2}%
-\varepsilon }(\mathcal{A}-i\beta _{n}))y_{n},$ then we can prove that $%
y_{n} $ is not bounded and then the polynomial stability of $(\mathcal{S})$
can't be better than $\frac{1}{t^{2}}.$
\end{remark}

\section*{Comment}

If we replace the boundary conditions by the followings 
\begin{eqnarray*}
u^{1}(a_{1},t) &=&0,\;(1-\delta )u_{xx}^{1}(a_{1},t)=0, \\
u_{x}^{j_{k}}(a_{k},t) &=&-u_{t}^{j_{k}}(a_{k},t),\;\;\;\;\;\;a_{k}\in 
\mathcal{V}_{ext}^{S}, \\
u_{xx}^{j_{k}}(a_{k},t)
&=&-u_{tx}^{j_{k}}(a_{k},t),\;\;u^{j_{k}}(a_{k},t)=0,\;\;\;\;\;\;\;a_{k}\in 
\mathcal{V}_{ext}^{B},
\end{eqnarray*}
then we obtain the same results.

\section*{Acknowledgements} The authors thank the referees for many valuable remarks which helped us to improve the paper significantly.

%\bibliographystyle{amsplain}
%\bibliography{mabib_2art3}

\end{document}